\newcommand{\mychi}{\raisebox{0pt}[1ex][1ex]{$\chi$}}
\newcounter{Theorem}
\newtheorem{corollary}[Theorem]{Corollary}
\newtheorem{remark}[Theorem]{Remark}
\newtheorem{definition}[Theorem]{Definition}
\newtheorem{example}[Theorem]{Example}
\newtheorem{theorem}[Theorem]{Theorem}
\newtheorem{lemma}[Theorem]{Lemma}
\numberwithin{Theorem}{section} \numberwithin{equation}{section}
\begin{document}
	
	\title []{ A class of weighted composition operators whose Range and Null spaces are complemented }
	
	
	\author[]{Anurag Kumar Patel}
	\address{Anurag Kumar Patel \\ Department of Mathematics \\ Banaras Hindu University \\ Varanasi 221005, India}
	\email{anuragrajme@gmail.com}
	
	
	
	
	\keywords{ Banach space, Quotient spaces, Composition operators, Fredholm operators }
	\subjclass{ 46B25, 54B15, 47B33, 47A53 }

	\begin{abstract}
	In this paper, we prove that the null space of a weighted composition operator on $\ell_p~ (1 \leq p < \infty)$ is a complemented subspace. We also give a necessary and sufficient condition for a weighted composition operator on $\ell_p$ whose range space is of finite co-dimension. Thereafter, we characterize a class of weighted composition operators whose range space is a complemented subspace.	Lastly, we characterize weighted composition operators on $\ell_p$ which are Fredholm operators.	
	\end{abstract}
	
	\maketitle
	
	\section{Introduction}
		Composition operators appeared implicitly in the B. O. Koopman's formulations of classical mechanics in 1931\cite{Koopman}. J. V. Ryff\cite{Ryff} initially examined these operators in the realm of Banach spaces of  analytic functions. However,  the term ``Composition Operator" was coined  by  Nordgren in 1968 \cite{Nordgren}.
	The study of composition operators has been extensively pursued in at least two distinct contexts: (i) composition operators defined on the space of holomorphic functions and (ii) composition operators defined on $L^p(\mu)$ associated with a $\sigma$- finite measure space. Composition operators represent diverse class of operators which illuminate the whole operator theory by providing nice examples and counter examples. Further, several important problems in operator theory such as the invariant subspace problem can be simplified in terms of composition operators  \cite{Cowen paper}. Thus the study of composition operators contributes significantly to our understanding of various issues in operator theory. Consequently, the study of composition operators is an active area of research. For more details in above matter we suggest following references \cite{Cowen, Flytzans, Jbbar, Jabb, kumarwco, Limparte, Nordgren 2, RKS2}.

	Weighted composition operators are meaningful generalization of composition operators with many interesting properties. For example, all the isometries of $L_p(\mu)$ spaces are weighted composition operators \cite{AhmedR}. Further, weighted composition operators were among the prominent examples of Hypercyclic operators \cite{Rolewicz}.\\

	\textbf{Notation and Terminology :} Let $\mathbb{N}$ denote the set of all positive integers. For $1 \leq p < \infty$, $\ell_p$ denotes the Banach space of all $p$-summable real or complex sequences and $\ell_\infty$ denotes the Banach space of all bounded sequences of complex numbers. Let support of a function $u:X \to \mathbb{C}$ be denoted as $S(u)=\{x\in X: u(x)\neq 0\}.$ For Banach spaces $X$ and $Y$, we denote by $\mathcal{B}(X, Y)$ the set of all bounded linear operators from $X$ in to $Y$. For $T\in \mathcal{B}(X, Y),$ we denote the range and null space by $\mathcal{R}(T)$ and $N(T),$ respectively. For given a subset $A$ of a non-empty set $X$, the characteristic function of $A$ is denoted by $\mychi_A$ and is defined as $$ \mychi_{A}(x)=\begin{cases} 
		{1}, & x \in A \\
		
		0, & \text{otherwise.}
	\end{cases}
	$$ \\Also $|A|$ denotes the cardinality of the set A and $\text{dim}(V)$ denotes the dimension of any vector space $V$.\\
	
		\textbf{Weighted composition operator}:	Let $X$ be a non-empty set and $V(X)$ be a linear space of complex valued functions on $X$ under point wise addition and scalar multiplication. If $u:X \to \mathbb{C}$ and $\phi$ is a self-map on $X$ into itself such that composition $u\cdot f\circ\phi$ belongs to $V(X$) for each $f\in V(X)$, then $u$ and $\phi$ induces a linear transformation $uC_\phi$ on $V(X),$ defined as $$uC_\phi (f)=u\cdot f\circ\phi \quad \forall~ f \in V(X).$$
	The transformation $uC_\phi$ is known as weighted composition transformation. If $V(X)$ is a Banach space or Hilbert space and $uC_\phi$ is a bounded linear operator on $V(X),$ then $uC_\phi$ is called weighted composition operator. In particular, if $u$ becomes the identity map 1, the corresponding operator $uC_\phi$ on $V(X)$ becomes $C_\phi$ and is said to be a weighted composition operator.
	\begin{definition}
		Let $u:X \to \mathbb{C}$ and $\phi$ be a self-map on the set of natural numbers $\mathbb{N}$. Then $u$ and $\phi$ induces a linear transformation $uC_\phi$ on $\ell_p$ , defined by $$uC_\phi\left(\sum_{n=1}^{\infty}x_n\mychi_n\right) = \sum_{n=1}^{\infty}u(n)x_n\mychi_{\phi^{-1}(n)}$$ where $\mychi_n$ stands for the characteristic function of the set $\{n\}$. 
		Further, $$(uC_\phi)^nf=u\cdot (u\circ \phi) \cdots (u\circ \phi_{n-1})\cdot (f\circ\phi_n)$$ where $\phi_n$ denotes $n^{th}$ iterate of $\phi.$
	\end{definition}
	
	Let $(X,\mathfrak{B},\mu)$ be a $\sigma$-finite measure space, $L^p(X,\mathfrak{B},\mu)$ be the Banach space of all p-summable complex-valued measurable functions on X and $L^\infty(X,\mathfrak{B},\mu)$  denote the Banach space of all essentially bounded complex-valued measurable functions on X. Let $u\in L^\infty(\mu)$ and $\phi: X\to X $ be a measurable transformation. Now we state a necessary and sufficient conditions on $u$ and $\phi$ so that $uC_\phi$ become a weighted composition operator on $L^p(X,\mathfrak{B},\mu)$.
	\\ \\        	The following  results can be found in \cite{Takagi ,Nordgren 2}.
	\begin{theorem}\label{N}
		Necessary and sufficient condition for a complex valued measurable function $u$ and a measurable transformation $\phi$ to induce a bounded operator on $L^p(\mu)(1\le p < \infty)$ defined by $uC_\phi(f)=u \cdot f\circ\phi$ are $\mu\phi^{-1}<<\mu$ and $\frac{d\mu_{u^p}, \phi}{d\mu}$ is essentially bounded, where $$\mu_{u^p, \phi}(E)=\int_{\phi^{-1}(E)}|u|^pd\mu$$ and $\frac{d\mu_{u^p}, \phi}{d\mu}$ represents the Radon-Nikodym derivative.  
	\end{theorem}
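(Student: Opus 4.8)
The plan is to express the $p$-norm of $uC_\phi f$ as an integral against the set function $\mu_{u^p,\phi}$ and then read off boundedness from the Radon--Nikodym theorem. The crucial preliminary step is the change-of-variables identity
$$\int_X (g\circ\phi)\,|u|^p\,d\mu=\int_X g\,d\mu_{u^p,\phi},$$
valid for every measurable $g:X\to[0,\infty]$. I would prove this first for $g=\mychi_E$, where both sides equal $\mu_{u^p,\phi}(E)$ by the very definition of $\mu_{u^p,\phi}$, then extend to nonnegative simple functions by linearity, and finally to arbitrary nonnegative measurable $g$ by the monotone convergence theorem. Taking $g=|f|^p$ yields the master formula
$$\|uC_\phi f\|_p^p=\int_X |u|^p\,|f\circ\phi|^p\,d\mu=\int_X |f|^p\,d\mu_{u^p,\phi},\qquad f\in L^p(\mu).$$

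For the sufficiency direction, I would assume $\mu\phi^{-1}\ll\mu$ and that $h:=\frac{d\mu_{u^p,\phi}}{d\mu}$ is essentially bounded, say $\|h\|_\infty=M$. The hypothesis $\mu\phi^{-1}\ll\mu$ first guarantees that $f\mapsto f\circ\phi$ is well defined on $\mu$-equivalence classes and, since $\mu(E)=0$ forces $\mu(\phi^{-1}(E))=0$ and hence $\mu_{u^p,\phi}(E)=0$, it also gives $\mu_{u^p,\phi}\ll\mu$, so that the Radon--Nikodym derivative $h$ exists (here $\mu$ being $\sigma$-finite is used). Substituting $d\mu_{u^p,\phi}=h\,d\mu$ into the master formula gives
$$\|uC_\phi f\|_p^p=\int_X |f|^p\,h\,d\mu\le M\int_X|f|^p\,d\mu=M\,\|f\|_p^p,$$
so $uC_\phi$ is bounded with $\|uC_\phi\|\le M^{1/p}$.

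For the necessity direction, assume $uC_\phi$ is bounded with norm $C$. Testing the master formula on $f=\mychi_E$ for a measurable set $E$ of finite measure gives $\mu_{u^p,\phi}(E)\le C^p\,\mu(E)$, and a $\sigma$-finiteness/exhaustion argument extends this to all measurable $E$. This single inequality simultaneously shows $\mu_{u^p,\phi}\ll\mu$ and that its Radon--Nikodym derivative satisfies $h\le C^p$ a.e., i.e.\ $h$ is essentially bounded, which is the second stated condition.

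The point I expect to require the most care is the condition $\mu\phi^{-1}\ll\mu$ in the necessity direction and its relation to well-definedness. Boundedness of $uC_\phi$ only controls $\phi$ on the support $S(u)$: from the master formula one directly recovers $\mu_{u^p,\phi}\ll\mu$, which is exactly the condition that $uC_\phi$ be well defined on equivalence classes, but upgrading this to the cleaner statement $\mu\phi^{-1}\ll\mu$ needs $|u|^p>0$ $\mu$-a.e., so that $\mu_{u^p,\phi}(E)=0$ genuinely forces $\mu(\phi^{-1}(E))=0$. I would therefore either invoke the standing assumption that $u$ is non-vanishing a.e.\ or restrict $\phi$ to $S(u)$, making explicit that $\mu\phi^{-1}\ll\mu$ plays the role of guaranteeing that $C_\phi$ is well defined, while the quantitative boundedness is entirely encoded in the essential boundedness of $\frac{d\mu_{u^p,\phi}}{d\mu}$.
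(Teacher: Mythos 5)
Your proposal is mathematically sound, but note that the paper itself offers no proof of this statement: Theorem \ref{N} is quoted as a known result, with the reader referred to the works of Takagi and of Nordgren (and, for the $\ell_p$ specialization, Singh--Manhas), so there is no internal argument to compare against. Your route --- establishing the change-of-variables identity $\int_X (g\circ\phi)\,|u|^p\,d\mu=\int_X g\,d\mu_{u^p,\phi}$ by the indicator/simple-function/monotone-convergence ladder, deducing the master formula $\|uC_\phi f\|_p^p=\int_X|f|^p\,d\mu_{u^p,\phi}$, and then reading off both directions from the Radon--Nikodym theorem --- is precisely the standard argument in those references, so you have in effect supplied the omitted proof rather than an alternative to it. Two of your side remarks deserve emphasis. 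First, your caveat about necessity is a genuine imprecision in the statement as quoted: if $u$ vanishes on a set of positive measure, boundedness of $uC_\phi$ only yields $\mu_{u^p,\phi}\ll\mu$, i.e.\ absolute continuity of $\mu\circ\phi^{-1}$ restricted to $S(u)$, and the trivial example $u\equiv 0$ with arbitrary $\phi$ shows $\mu\phi^{-1}\ll\mu$ cannot be recovered in general; your fix (assume $u\neq 0$ a.e., or restrict $\phi$ to $S(u)$) is the right one, and indeed the literature usually imposes $\mu\phi^{-1}\ll\mu$ as a standing hypothesis for well-definedness rather than deriving it. Second, in the necessity direction you should make explicit that the inequality $\mu_{u^p,\phi}(E)\le C^p\mu(E)$ also delivers the $\sigma$-finiteness of $\mu_{u^p,\phi}$ (it is dominated by $C^p\mu$ on a $\sigma$-finite exhaustion), which is what licenses the application of the Radon--Nikodym theorem before you argue $h\le C^p$ a.e.\ by testing on a finite-measure subset of $\{h>C^p\}$; as written this is implicit. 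With those two points spelled out, your argument is complete and also recovers the norm identity $\|uC_\phi\|=\|d\mu_{u^p,\phi}/d\mu\|_\infty^{1/p}$, of which the formula in Theorem \ref{p4pre01} is the counting-measure special case.
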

	In case when $X=\mathbb{N}$ and $\mu$ is the counting measure on X, then $L^p(\mathbb{N},\mu)=\ell^p.$ In this setting theorem \ref{N} gets the following form. 
	
	\begin{theorem}\rm {\cite{RKS2}} \label{p4pre01}
		Let $u:\mathbb{N} \to \mathbb{C}$ and $\phi:\mathbb{N} \to \mathbb{N}.$ Then the weighted composition operator $uC_\phi$ on $\ell_p$ is bounded if and only if there exist a positive constant $K>0$ such that $$\sup_{n\geq1} \left(\sum_{m\in \phi^{-1}(n)}|u(m)|^p\right)\leq K.$$
	In this case $$\|uC_\phi\|=\sup_{n\geq1} \left(\sum_{m\in \phi^{-1}(n)}|u(m)|^p\right).$$
	\end{theorem}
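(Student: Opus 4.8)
The plan is to reduce the boundedness question to a direct computation of $\|uC_\phi f\|_p$, first on a general element and then on the coordinate vectors. First I would record the explicit action of the operator on coordinates: for $f=\sum_{n} x_n \mychi_n \in \ell_p$, the definition gives $(uC_\phi f)(m) = u(m)\, x_{\phi(m)}$ for every $m\in\mathbb{N}$. Taking $p$-th powers and summing over $m$ yields
$$\|uC_\phi f\|_p^p = \sum_{m=1}^{\infty} |u(m)|^p\, |x_{\phi(m)}|^p.$$

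The key step is to reorganize this sum by grouping the indices $m$ according to the value $n=\phi(m)$, that is, over the fibers $\phi^{-1}(n)$. Since all terms are nonnegative, Tonelli's theorem for series justifies the rearrangement and gives
$$\|uC_\phi f\|_p^p = \sum_{n=1}^{\infty} |x_n|^p \left(\sum_{m\in\phi^{-1}(n)} |u(m)|^p\right).$$
Writing $w_n := \sum_{m\in\phi^{-1}(n)} |u(m)|^p$ (interpreted as $0$ when the fiber is empty), this exhibits $\|uC_\phi f\|_p^p$ as a weighted sum $\sum_n w_n |x_n|^p$, which is the heart of the argument.

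For sufficiency, assuming $\sup_{n} w_n \le K$, I would bound $\sum_n w_n |x_n|^p \le (\sup_n w_n)\sum_n |x_n|^p = (\sup_n w_n)\,\|f\|_p^p$, so $uC_\phi$ is bounded. For necessity I would test the (bounded) operator on the unit vectors $\mychi_n$: since $\|\mychi_n\|_p = 1$ and $\|uC_\phi \mychi_n\|_p^p = w_n$, boundedness forces $w_n \le \|uC_\phi\|^p$ for every $n$, whence $\sup_n w_n \le \|uC_\phi\|^p < \infty$. Combining the two estimates pins down the operator norm exactly.

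The main obstacle is bookkeeping rather than depth: one must handle fibers $\phi^{-1}(n)$ that are empty or infinite, and confirm that the interchange of summation is legitimate (which nonnegativity guarantees). I would also flag a discrepancy with the stated norm formula: the coordinate computation naturally yields $\|uC_\phi\| = \left(\sup_{n}\sum_{m\in\phi^{-1}(n)}|u(m)|^p\right)^{1/p}$, i.e. the $p$-th root of the quantity on the right-hand side, so the displayed equality should carry the exponent $1/p$; the boundedness criterion itself is unaffected by this correction.
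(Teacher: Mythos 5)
Your proposal is correct, and in fact there is nothing in the paper to compare it against: Theorem \ref{p4pre01} is stated without proof, quoted from Singh and Manhas \cite{RKS2} as a known preliminary. Your argument is the standard one for this result: since the fibers $\phi^{-1}(n)$ partition $\mathbb{N}$, Tonelli's theorem for nonnegative series legitimately reorganizes $\|uC_\phi f\|_p^p=\sum_m |u(m)|^p|x_{\phi(m)}|^p$ into $\sum_n w_n|x_n|^p$ with $w_n=\sum_{m\in\phi^{-1}(n)}|u(m)|^p$, after which sufficiency is the trivial bound $\sum_n w_n|x_n|^p\le(\sup_n w_n)\|f\|_p^p$ and necessity follows by testing on the unit vectors $\mychi_n$, for which $\|uC_\phi\mychi_n\|_p^p=w_n$. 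Your handling of empty fibers ($w_n=0$) and possibly infinite fibers is also right. Finally, the discrepancy you flag is genuine: the computation yields $\|uC_\phi\|^p=\sup_{n\ge1}\sum_{m\in\phi^{-1}(n)}|u(m)|^p$, so the paper's displayed formula should read
\begin{equation*}
\|uC_\phi\|=\left(\sup_{n\geq1} \sum_{m\in \phi^{-1}(n)}|u(m)|^p\right)^{1/p},
\end{equation*}
i.e.\ the statement as printed conflates $\|uC_\phi\|$ with $\|uC_\phi\|^p$ (this is how the result appears in \cite{RKS2}); as you note, the boundedness criterion itself is unaffected.
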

	The following remark follows directly from the above theorem.
	\begin{remark}
		Let $u:\mathbb{N} \to \mathbb{C}$ and $\phi:\mathbb{N} \to \mathbb{N}$ be such that $uC_\phi \in  \mathcal{B}(\ell_p).$ Then $u\in \ell_\infty.$
	\end{remark}
	
%

In this paper, we study complemented subspace problem for null space and range space of a weighted composition operator on $\ell_p.$ To that end, we give a brief description of the complemented subspace problem as follows.
\begin{definition}\rm \cite{Murray}
	Let $X$ be a Banach space. Then a closed subspace $E$ of $X$ is called complemented if there is a closed subspace $F\subseteq X$(called a complementary subspace of $E$) such that $E\oplus F=X.$
\end{definition}

\textbf{Complemented subspace problem.}
The problem concerning complemented subspaces lie at the core of Banach space theory, representing a foundational aspect with a history spanning over fifty years. These concepts are pivotal in shaping and advancing the overall theory of Banach spaces. In 1937, Murray \cite{Murray} proved, for the first time, that $\ell_p,~ p\neq 2,~ p>1$ has non-complemented subspace. This significant
fact has been generalized by many mathematicians cf. \cite{Phillips,  Rosenthal}. In 1960, Pelczynski \cite{Pelczynski} showed that complemented subspaces of $\ell_1$ are isomorphic to $\ell_1$. Kothe \cite{Kothe} generalized this result to the non-separable case. In 1967, Lindenstrauss \cite{Lindenstrauss67} proved that every infinite dimensional complemented subspace of $\ell_\infty$ is isomorphic to $\ell_\infty$. This also holds if $\ell_\infty$ is replaced by $\ell_p,~ 1\leq p<\infty,~ c_0$ or $c.$ 
In 1971, Lindenstrauss and Tzafriri \cite{Lindenzafriri} proved that every infinite dimensional Banach space which is not isomorphic to a Hilbert space contains a closed non-complemented subspace. In 1993, Gowers and Maurey \cite{GowersBernard} showed that there exists a Banach space $X$ without non-trivial complemented subspaces.

We now state following well known theorems, which are used in our subsequent results.
\begin{theorem}\rm \label{p4pre03} \cite{W Rudin}
	Let $X$ be a Banach space. Then every finite dimensional subspace $E$ of $X$ is complemented.
\end{theorem}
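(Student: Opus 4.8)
The plan is to construct an explicit bounded projection of $X$ onto $E$; its kernel will then furnish the complementary subspace, since the kernel of a bounded operator is automatically closed. Because $E$ is finite dimensional, say $\dim(E)=n$, I would first fix a basis $\{e_1,\dots,e_n\}$ of $E$ and introduce the associated coordinate functionals $f_i:E\to\mathbb{C}$ determined by $f_i(e_j)=\delta_{ij}$. As every linear functional on a finite dimensional normed space is continuous, each $f_i$ is a bounded linear functional on $E$.

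The decisive step is to pass from functionals defined on $E$ to functionals defined on all of $X$. Here I would invoke the Hahn--Banach theorem to extend each $f_i$ to a bounded linear functional $F_i$ on $X$ with $F_i|_{E}=f_i$. This extension is precisely where the Banach space structure is used, and it is the crux of the argument; the remainder is formal verification. I do not foresee a genuine obstacle beyond correctly applying Hahn--Banach, the only care being to ensure the extended functionals remain bounded, which the theorem guarantees.

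With the $F_i$ in hand, I would define $P:X\to X$ by
$$P(x)=\sum_{i=1}^{n}F_i(x)\,e_i.$$
Then $P$ is linear and bounded, being a finite sum of bounded functionals times fixed vectors, with $\|P\|\le\sum_{i=1}^{n}\|F_i\|\,\|e_i\|$. By construction $\mathcal{R}(P)\subseteq E$, and for each basis vector $P(e_j)=\sum_{i}F_i(e_j)e_i=\sum_{i}f_i(e_j)e_i=e_j$, so $P$ restricts to the identity on $E$; hence $\mathcal{R}(P)=E$ and $P^2=P$, that is, $P$ is a projection.

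Finally I would set $F=N(P)$, which is closed as the kernel of a bounded operator. For any $x\in X$ I would write $x=P(x)+\bigl(x-P(x)\bigr)$, where the first summand lies in $E=\mathcal{R}(P)$ and the second lies in $F$ because $P\bigl(x-P(x)\bigr)=P(x)-P^2(x)=0$. Moreover, if $y\in E\cap F$ then $y=P(y)=0$, so the sum is direct. This yields $X=E\oplus F$ with both summands closed, proving that $E$ is complemented.
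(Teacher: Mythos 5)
Your proof is correct and is exactly the standard argument for this classical fact: the paper states it without proof, citing Rudin, whose proof likewise builds a bounded projection onto $E$ by Hahn--Banach extension of the coordinate functionals and takes its kernel as the closed complement. Nothing is missing; the Hahn--Banach step and the verification that $P^2=P$ with $X=E\oplus N(P)$ are all handled properly.
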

\begin{theorem} \rm \label{p4pre04} \cite{W Rudin}
	Let $X$ be a Banach space and $E$ be its closed subspace. If $$\text{codim}(E)=\text{dim}(X \textfractionsolidus E)<\infty,$$ then $E$ is complemented.
\end{theorem}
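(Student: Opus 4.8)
The plan is to construct the complementary subspace explicitly by lifting a basis of the finite-dimensional quotient $X/E$ back into $X$. Write $n = \text{codim}(E) = \dim(X/E)$ and let $q : X \to X/E$ denote the canonical quotient map. Since $X/E$ is an $n$-dimensional vector space, I would fix a basis $\{\bar{x}_1, \ldots, \bar{x}_n\}$ of $X/E$ and, for each $i$, choose a representative $x_i \in X$ with $q(x_i) = \bar{x}_i$. Setting $F = \text{span}\{x_1, \ldots, x_n\}$ then gives the candidate complement.

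Next I would observe that $F$ is automatically closed, since it is finite-dimensional and every finite-dimensional subspace of a normed space is closed (this is also implicit in Theorem \ref{p4pre03}). This is the only step that genuinely uses the ambient Banach-space structure rather than pure linear algebra.

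It then remains to verify $X = E \oplus F$. For the sum, given an arbitrary $x \in X$, I expand $q(x) = \sum_{i=1}^{n} \alpha_i \bar{x}_i$ in the chosen basis; then $q\left(x - \sum_{i=1}^{n} \alpha_i x_i\right) = 0$, so $y := x - \sum_{i=1}^{n} \alpha_i x_i \in E$ and hence $x = y + \sum_{i=1}^{n} \alpha_i x_i \in E + F$. For the intersection, if $z \in E \cap F$, write $z = \sum_{i=1}^{n} \alpha_i x_i$; applying $q$ yields $0 = q(z) = \sum_{i=1}^{n} \alpha_i \bar{x}_i$, and linear independence of the basis forces $\alpha_i = 0$ for all $i$, so $z = 0$. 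Thus $E \cap F = \{0\}$ and $E + F = X$, giving $X = E \oplus F$ with both summands closed, so $E$ is complemented.

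I expect no serious obstacle here, since the core of the argument is just lifting a vector-space basis through the quotient map, together with the elementary fact that finite-dimensional subspaces are closed. If one additionally wants the decomposition to be a topological direct sum, the continuity of the associated projections follows from the open mapping theorem applied to the continuous bijection $E \times F \to X$, $(e,f) \mapsto e+f$ between Banach spaces.
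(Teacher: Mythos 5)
Your proof is correct. The paper does not actually prove this statement---it is quoted from Rudin \cite{W Rudin} as a known fact---so there is no internal proof to compare against; your argument (lift a basis of the finite-dimensional quotient $X/E$ through the quotient map $q$, let $F$ be the span of the lifts, note $F$ is closed because it is finite-dimensional, then verify $E+F=X$ and $E\cap F=\{0\}$) is exactly the standard argument in the cited source. One small remark: under the paper's definition of complemented, which requires only a closed subspace $F$ with $E\oplus F=X$, your concluding open-mapping-theorem step is not needed, though it is correct and gives the stronger conclusion that the associated projections are bounded.
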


The paper is arranged in the following manner.\\
In the second section, we find the dimension of null space of a weighted composition operator $uC_\phi$ on $\ell_p.$  We also show that the null space of $uC_\phi$ on $\ell_p$ is always complemented. Further, we also give a necessary and sufficient condition for the null space of a weighted composition operator on $\ell_p$ to be of infinite co-dimension.

In the third section, we discuss the above aspects for the range space of weighted composition operators. In the last section we characterize weighted composition operators on $\ell_p$ which are Fredholm operators.

	\begin{center}
		\textbf{Main results}
	\end{center}
	
	\section{\textbf{Weighted composition operators on $\ell_p~(1\leq p < \infty)$ spaces with finite dimensional null space} }\label{sec4.2}

	\begin{theorem}\label{npt1}
		Let $u:\mathbb{N} \to \mathbb{C}$ and $\phi:\mathbb{N} \to \mathbb{N}$ be such that $uC_\phi \in \mathcal{B}(\ell_p).$ Suppose $S(u)$ is invariant under $\phi.$ Then, $$N((uC_\phi)^n)=\{f \in \ell_p:f|_{\phi_n(S(u))}=0\}.$$
	\end{theorem}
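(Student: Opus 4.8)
The plan is to work coordinatewise using the iterate formula for $(uC_\phi)^n$ recorded in the Definition, and then to reduce the statement to a purely set-theoretic identity involving $\phi_n(S(u))$.

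First I would read the iterate formula $(uC_\phi)^n f = u\cdot(u\circ\phi)\cdots(u\circ\phi_{n-1})\cdot(f\circ\phi_n)$ pointwise: for $f=\sum_m x_m\mychi_m\in\ell_p$ and each $m\in\mathbb{N}$,
$$\big((uC_\phi)^n f\big)(m)=\Big(\prod_{k=0}^{n-1}u(\phi_k(m))\Big)\,f(\phi_n(m)),$$
where $\phi_0=\mathrm{id}$. Hence $f\in N((uC_\phi)^n)$ exactly when, for every $m$, either the product $\prod_{k=0}^{n-1}u(\phi_k(m))$ vanishes or $f(\phi_n(m))=0$. Since the product is nonzero precisely when $\phi_k(m)\in S(u)$ for all $k=0,\dots,n-1$, membership in the null space is equivalent to requiring $f(\phi_n(m))=0$ for every $m$ in the set
$$D:=\{m\in\mathbb{N}:\ \phi_k(m)\in S(u)\ \text{for } k=0,\dots,n-1\}.$$
Thus $N((uC_\phi)^n)=\{f\in\ell_p: f|_{\phi_n(D)}=0\}$, and the whole statement collapses to proving the set equality $\phi_n(D)=\phi_n(S(u))$.

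Next I would establish the two inclusions. The inclusion $\phi_n(D)\subseteq \phi_n(S(u))$ is immediate and needs no hypothesis: if $m\in D$ then in particular $m=\phi_0(m)\in S(u)$, so $\phi_n(m)\in\phi_n(S(u))$. The reverse inclusion $\phi_n(S(u))\subseteq\phi_n(D)$ is where the invariance hypothesis is used. Given $x\in S(u)$, invariance $\phi(S(u))\subseteq S(u)$ yields, by induction on $k$, that $\phi_k(x)\in S(u)$ for every $k\ge 0$; in particular $\phi_k(x)\in S(u)$ for $k=0,\dots,n-1$, so $x\in D$ and $\phi_n(x)\in\phi_n(D)$. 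Combining the inclusions gives $\phi_n(D)=\phi_n(S(u))$, completing the argument.

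The only delicate point is this reverse inclusion, and it is exactly here that the assumption ``$S(u)$ is invariant under $\phi$'' is indispensable: without it one could have $x\in S(u)$ whose forward orbit leaves $S(u)$, forcing the weight product along $x$ to vanish and removing $\phi_n(x)$ from the forced-zero set $\phi_n(D)$, so that $\phi_n(S(u))\subseteq\phi_n(D)$ would fail. I would therefore present the induction on $k$ carefully, noting that $\phi_0=\mathrm{id}$ supplies the base case ensuring $x\in S(u)$ is itself used; all remaining steps are routine verifications of the pointwise formula and bookkeeping of the iterates.
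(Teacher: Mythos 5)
Your proof is correct and takes essentially the same route as the paper: both evaluate $(uC_\phi)^n f$ pointwise via the iterate formula and use invariance of $S(u)$ (by induction along the orbit) to ensure the weight product $\prod_{k=0}^{n-1}u(\phi_k(m))$ is nonzero precisely for $m\in S(u)$. Your intermediate identity $N((uC_\phi)^n)=\{f\in\ell_p : f|_{\phi_n(D)}=0\}$, valid without the invariance hypothesis, is just a more explicit packaging of what the paper does implicitly, and it cleanly isolates the single point ($D=S(u)$) where the hypothesis is used.
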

	\begin{proof}
		Suppose that $f\in N((uC_\phi)^n).$ Then $(uC_\phi)^n(f)=0.$ Therefore, $$u\cdot (u\circ \phi) \cdots (u\circ \phi_{n-1})\cdot (f\circ \phi_n)=0.$$
		Let $g=u\cdot (u\circ \phi) \cdots (u\circ \phi_{n-1})\cdot (f\circ \phi_n).$ Now, since $S(u)$ is invariant under $\phi,$ then for each $m\in S(u)$ $$g(m)=0 \text{ implies that } f(\phi_n(m))=0.$$ 
		Therefore $$f|_{\phi_n(S(u))}=0.$$
		
		Conversely, suppose $f|_{\phi_n(S(u))}=0.$ Then, $f(\phi_n(m))=0$ for each $m\in S(u).$\\
		Hence $g(m)=0$ for all $m\in \mathbb{N}.$ This implies that $(uC_\phi)^n(f)=0.$ Therefore $$f\in N((uC_\phi)^n).$$
	\end{proof}
	The following remark is a particular case of the above Theorem
	\begin{remark}\label{npr1}
		If $u:\mathbb{N} \to \mathbb{C}$ is bounded away from zero and $uC_\phi \in \mathcal{B}(\ell_p).$ Then $$N((uC_\phi)^n)=\{f \in \ell_p:f|_{\phi_n(\mathbb{N})}=0\}.$$ 
	\end{remark}
	
	\begin{theorem}\label{npt2}
		Let $u:\mathbb{N} \to \mathbb{C}$ and $uC_\phi \in \mathcal{B}(\ell_p).$ Suppose that $S(u)$ is invariant under $\phi.$ Then, for each $n\geq 1,$ $$\text{dim}\left(N((uC_\phi)^n)\right)=|\mathbb{N}-\phi_n(S(u))|.$$
	\end{theorem}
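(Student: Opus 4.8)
The plan is to read off the dimension of the null space directly from the explicit description of $N((uC_\phi)^n)$ established in Theorem~\ref{npt1}. By that theorem, under the standing hypothesis that $S(u)$ is invariant under $\phi$, we have
\begin{equation*}
N((uC_\phi)^n)=\{f \in \ell_p:f|_{\phi_n(S(u))}=0\}.
\end{equation*}
So a sequence $f=\sum_{k=1}^\infty x_k\mychi_k$ lies in the null space precisely when $x_k=0$ for every $k\in\phi_n(S(u))$, while the coordinates indexed by the complement $\mathbb{N}\setminus\phi_n(S(u))$ are free (subject only to $p$-summability). The natural thing to do, then, is to exhibit a Schauder-type basis for this subspace consisting of the standard unit vectors supported off $\phi_n(S(u))$.

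First I would set $A=\mathbb{N}\setminus\phi_n(S(u))$ and consider the family $\{\mychi_k : k\in A\}$. Each such $\mychi_k$ clearly satisfies $\mychi_k|_{\phi_n(S(u))}=0$, so it belongs to $N((uC_\phi)^n)$, and these vectors are linearly independent. Conversely, any $f\in N((uC_\phi)^n)$ has all coordinates in $\phi_n(S(u))$ vanishing, so $f=\sum_{k\in A}x_k\mychi_k$, showing that $\{\mychi_k:k\in A\}$ spans (in the closed-linear-span / Schauder sense) the whole null space. Hence the dimension of $N((uC_\phi)^n)$ equals the number of free coordinates, namely $|A|=|\mathbb{N}\setminus\phi_n(S(u))|=|\mathbb{N}-\phi_n(S(u))|$, which is exactly the claimed formula.

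The only subtlety to be careful about is the meaning of ``$\text{dim}$'' when $A$ is infinite. If $\mathbb{N}-\phi_n(S(u))$ is a finite set, the argument is entirely finite-dimensional and the unit vectors form an honest algebraic basis, so $\dim N((uC_\phi)^n)=|A|$ on the nose. If $A$ is infinite, I would interpret the statement as asserting that $N((uC_\phi)^n)$ is (isometrically isomorphic to) the $\ell_p$-space built on the index set $A$, with the unit vectors as its standard Schauder basis, so that the cardinality of the spanning set equals $|\mathbb{N}-\phi_n(S(u))|$; this is where one must note that the map sending $f$ to its restriction to coordinates in $A$ identifies $N((uC_\phi)^n)$ with $\ell_p(A)$.

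I expect the main (and really the only) obstacle to be purely expository rather than mathematical: namely pinning down precisely the sense in which ``dimension'' is meant so that the equality with a possibly-infinite cardinal $|\mathbb{N}-\phi_n(S(u))|$ is correct, and confirming that the closed span of $\{\mychi_k:k\in A\}$ inside $\ell_p$ is genuinely all of $N((uC_\phi)^n)$ (which follows from $p$-summability of the coordinate sequence of any $f$ in the null space). Once the indexing set $A=\mathbb{N}-\phi_n(S(u))$ is identified via Theorem~\ref{npt1}, the dimension count is immediate.
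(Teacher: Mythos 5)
Your proposal follows essentially the same route as the paper: both invoke Theorem~\ref{npt1} to identify $N((uC_\phi)^n)$ with the sequences vanishing on $\phi_n(S(u))$, and both exhibit $\{\mychi_k : k \in A\}$, $A=\mathbb{N}-\phi_n(S(u))$, as a linearly independent spanning family via the expansion $f=\sum_{k\in A}f(k)\mychi_k$. If anything, you are more careful than the paper on the one delicate point, namely that for infinite $A$ the spanning is in the Schauder (closed-span) sense, identifying the null space with $\ell_p(A)$, rather than an algebraic Hamel-basis claim.
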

	\begin{proof}
		Let $n\geq 1$ and $A=\mathbb{N}-\phi_n(S(u)).$ If $A$ is empty. Then, $\phi_n(S(u))=\mathbb{N}$ and hence, by Theorem \ref{npt1}, $N((uC_\phi)^n)=\{0\}.$ Therefore, $$\text{dim}(N((uC_\phi)^n=0.$$
		
		Now, suppose $A$ is a non-empty set.  Since for each $n\in A,$ $$\mychi_{n}|_{\phi_n(S(u))}=0.$$
		Hence  by Theorem \ref{npt1}, $\mychi_{n}\in N((uC_\phi)^n) \text{ for each } n\in A.$

		We claim that $\mathfrak{B}=\{\mychi_{n}:n\in A\}$ is a basis for $N((uC_\phi)^n).$ We first prove that $\mathfrak{B}$ is linearly independent. Let $\{n_1, n_2,...n_k\}\subset A$ and $\lambda_{1}, \lambda_{2},..., \lambda_{k}\in \mathbb{F}$ be such that $\sum_{i=1}^{k}\lambda_i \mychi_{n_i}=0.$
		Let $g=\sum_{i=1}^{k}\lambda_i \mychi_{n_i}.$ Then, $$g(n_j)=\lambda_{j}=0 \text{ for each } 1\leq j\leq k.$$ Hence, $\mathfrak{B}$ is linearly independent.
		
		Now, we prove that $\mathfrak{B}$ is a spanning set for $N((uC_\phi)^n).$ Let $f=\sum_{n=1}^{\infty}f(n)\mychi_n \in N((uC_\phi)^n).$ Then $f=\sum\limits_{n\in A}f(n)\mychi_n$ since $f|_{\phi_n(S(u))}=0.$ Therefore $\mathfrak{B}$ is a spanning set for $N((uC_\phi)^n).$	
		\end{proof}
	
	\begin{corollary}\label{npc1}
		Let $u:\mathbb{N} \to \mathbb{C}$ and $uC_\phi \in \mathcal{B}(\ell_p).$ Suppose that $S(u)$ is invariant under $\phi.$ Then dimension of $N((uC_\phi)^n)$ is finite if and only if~ $\mathbb{N}-\phi_n(S(u))$ is a finite set. In this case, $$\text{dim}(N((uC_\phi)^n))=|\mathbb{N}-\phi_n(S(u))|.$$
	\end{corollary}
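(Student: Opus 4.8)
The plan is to deduce this corollary immediately from Theorem \ref{npt2}, which has already been established under exactly the same standing hypotheses (namely $uC_\phi \in \mathcal{B}(\ell_p)$ and $S(u)$ invariant under $\phi$). That theorem furnishes, for every $n \geq 1$, the unconditional identity
$$\text{dim}\left(N((uC_\phi)^n)\right) = |\mathbb{N} - \phi_n(S(u))|,$$
valid regardless of whether the right-hand side is finite or infinite. Since the corollary is merely a reformulation of this identity together with its ``in this case'' clause, the entire argument reduces to reading off both implications from one equation.

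For the forward direction I would argue as follows. Suppose $\text{dim}(N((uC_\phi)^n))$ is finite. By the identity above, the cardinal $|\mathbb{N} - \phi_n(S(u))|$ equals this finite dimension, hence is finite; a subset of $\mathbb{N}$ of finite cardinality is a finite set, so $\mathbb{N} - \phi_n(S(u))$ is finite. Conversely, if $\mathbb{N} - \phi_n(S(u))$ is a finite set, then its cardinality $|\mathbb{N} - \phi_n(S(u))|$ is a finite number, and the identity forces $\text{dim}(N((uC_\phi)^n))$ to equal that finite number. The closing assertion that $\text{dim}(N((uC_\phi)^n)) = |\mathbb{N} - \phi_n(S(u))|$ ``in this case'' is then simply the instance of Theorem \ref{npt2} in which both sides are finite.

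There is essentially no genuine obstacle here: all the real work was done in Theorem \ref{npt2}, where the explicit basis $\mathfrak{B} = \{\mychi_n : n \in A\}$ with $A = \mathbb{N} - \phi_n(S(u))$ was exhibited and shown to be linearly independent and spanning. The only point worth stating carefully is that the equivalence ``finite dimension $\iff$ finite set'' is legitimate precisely because Theorem \ref{npt2} identifies the dimension with the cardinality of $A$ as cardinal numbers, so finiteness transfers in both directions without any further estimate. I would therefore keep the proof to two or three lines, invoking Theorem \ref{npt2} and noting that a cardinal is finite exactly when the set it counts is finite.
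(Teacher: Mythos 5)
Your proposal is correct and follows essentially the same route as the paper: the paper's own proof likewise derives the corollary directly from Theorem \ref{npt2}, citing the basis $\mathfrak{B}=\{\mychi_n : n\in A\}$ with $A=\mathbb{N}-\phi_n(S(u))$ to conclude that the dimension is finite exactly when $A$ is finite. Your added remark that finiteness transfers in both directions because the dimension is identified with the cardinality of $A$ is just a more explicit phrasing of the same one-line deduction.
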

	\begin{proof}
		Let $A=\mathbb{N}-\phi_n(S(u)).$ Then by the Theorem \ref{npt2}, $\mathfrak{B}=\{\mychi_{n}:n\in A\}$ is a basis for $N((uC_\phi)^n)$ . Hence $\text{dim}(N((uC_\phi)^n))$ is finite if and only if $A$ is finite. This completes the proof. 
	\end{proof}	
	
	We now give two examples, one in which the dimension of the null space of $uC_\phi$ is finite and one in which the dimension of the null space of $uC_\phi$ is infinite.
	\begin{example}
		Let $u:\mathbb{N} \to \mathbb{C}$ be defined as 
		$$ u(n)=\left\{\begin{array}{rcl} 0 & \mbox{if} & n=1, 2 \\ \frac{1}{n} & \mbox{if} & n>2
		\end{array}\right.$$
	and $\phi:\mathbb{N} \to \mathbb{N}$ be defined as $\phi(n)=n~\forall~n \in \mathbb{N}.$ Clearly $S(u)=\{n:n\geq3\}.$ Hence $\phi(S(u))=\{n:n\geq4\}$ and $\mathbb{N}-\phi(S(u))=\{1, 2, 3\}.$ Therefore by Corollary \ref{npc1}, $\text{dim}(N(uC_\phi))=3.$
	\end{example}
	\begin{example}
		Let $p_n$ denote the $n^{th}$ prime.  Define $u:\mathbb{N} \to \mathbb{C}$ as $$ u(n)=\left\{\begin{array}{rcl} 0 & \mbox{if} & 1\leq n \leq 10 \\ \frac{1}{n} & \mbox{if} & n>10
				\end{array}\right.$$
		and define $\phi:\mathbb{N} \to \mathbb{N}$ as $\phi(n)=p_n.$ 
		Then $S(u)=\{n:n>10\}$ and $\phi(S(u))\subseteq S(u).$
		Hence by Corollary \ref{npc1}, $\text{dim}(N(uC_\phi))=|\mathbb{N}-\phi(S(u))|=\infty.$
	\end{example}
	
	The following lemma gives a necessary and sufficient condition on $\phi$ for null space of $C_\phi$ to be an invariant subspace of $C_\phi.$
	\begin{lemma}\label{raju1}
	Let $u:\mathbb{N} \to \mathbb{C}$ and $uC_\phi$ is a bounded linear operator on $\ell_p.$ Suppose $S(u)$ is invariant under $\phi.$ Then $N((uC_\phi)^2)=N(uC_\phi)$ if and only if $\phi_2(S(u))=\phi(S(u))$.	
	\end{lemma}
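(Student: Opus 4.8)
The plan is to leverage Theorem \ref{npt1}, which gives an explicit description of the null spaces in terms of restriction conditions. By that theorem, $N(uC_\phi) = \{f \in \ell_p : f|_{\phi(S(u))} = 0\}$ and $N((uC_\phi)^2) = \{f \in \ell_p : f|_{\phi_2(S(u))} = 0\}$. So the whole statement reduces to comparing two subspaces of $\ell_p$ that are each determined by vanishing on a prescribed subset of $\mathbb{N}$. The strategy is therefore to translate equality of these null spaces into equality of the underlying index sets $\phi(S(u))$ and $\phi_2(S(u))$, and vice versa.

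First I would establish the general inclusion $\phi_2(S(u)) \subseteq \phi(S(u))$ coming from the invariance hypothesis: since $S(u)$ is invariant under $\phi$, we have $\phi(S(u)) \subseteq S(u)$, and applying $\phi$ once more gives $\phi_2(S(u)) = \phi(\phi(S(u))) \subseteq \phi(S(u))$. Consequently the vanishing condition on $\phi(S(u))$ is at least as strong as that on $\phi_2(S(u))$, which yields $N(uC_\phi) \subseteq N((uC_\phi)^2)$ directly from the descriptions above. This one-sided inclusion holds unconditionally, so only the reverse needs the characterization.

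For the forward direction, I would assume $\phi_2(S(u)) = \phi(S(u))$ and conclude that the two defining sets for the null spaces coincide, whence the null spaces are equal. For the converse, I would argue contrapositively: if $\phi_2(S(u)) \neq \phi(S(u))$, then by the inclusion above the set $\phi(S(u)) \setminus \phi_2(S(u))$ is nonempty, so there exists some $m \in \phi(S(u)) \setminus \phi_2(S(u))$. I would then exhibit the witness $\mychi_m$: it vanishes on $\phi_2(S(u))$ but not on $\phi(S(u))$ (since $\mychi_m(m) = 1$ and $m \in \phi(S(u))$), so $\mychi_m \in N((uC_\phi)^2)$ but $\mychi_m \notin N(uC_\phi)$, giving $N((uC_\phi)^2) \neq N(uC_\phi)$.

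The reasoning is essentially bookkeeping on index sets once Theorem \ref{npt1} is in hand, so I do not anticipate a serious analytic obstacle. The one point requiring care is making sure the set-difference $\phi(S(u)) \setminus \phi_2(S(u))$ is genuinely nonempty in the contrapositive step, which is exactly where the unconditional inclusion $\phi_2(S(u)) \subseteq \phi(S(u))$ earns its keep: without it, $\phi_2(S(u)) \neq \phi(S(u))$ would not guarantee an element lying in $\phi(S(u))$ but outside $\phi_2(S(u))$, and the characteristic-function witness could fail to separate the two null spaces.
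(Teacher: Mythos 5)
Your proposal is correct and takes essentially the same route as the paper: both reduce the statement, via the characterization in Theorem \ref{npt1}, to comparing the index sets $\phi(S(u))$ and $\phi_2(S(u))$. Your contrapositive step, using the inclusion $\phi_2(S(u))\subseteq\phi(S(u))$ and the witness $\mychi_m$ for $m\in\phi(S(u))\setminus\phi_2(S(u))$, merely spells out the detail that the paper compresses into the phrase ``it follows from Theorem \ref{npt1}.''
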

	
	\begin{proof}
	Suppose  $N((uC_\phi)^2)=N(uC_\phi).$ Then it follows from \ref{npt1} that $$\phi_2(S(u))=\phi(S(u)).$$
	
	Conversely, if $\phi_2(S(u))=\phi(S(u)).$ Then 
	\begin{align*}
		N((uC_\phi)^2)&=\{f\in \ell_p:f|_{\phi_2(S(u))=0}\}\\&=\{f\in \ell_p:f|_{\phi(S(u))=0}\}\\&=N(uC_\phi).
	\end{align*}	
	\end{proof}
	
	
	

	\begin{lemma}\label{npl1}
		Suppose $u:\mathbb{N} \to \mathbb{C}$ is bounded away from zero and $uC_\phi \in \mathcal{B}(\ell_p).$ Then $\text{dim}(N(uC_\phi))<\infty$ if and only if for each $n\geq 1$ $$\text{dim}(N((uC_\phi)^n))<\infty.$$
	\end{lemma}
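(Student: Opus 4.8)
The plan is to translate the statement about dimensions of null spaces into a purely set-theoretic statement about the iterates $\phi_n$, and then argue the nontrivial direction by induction. Since $u$ is bounded away from zero, its support is all of $\mathbb{N}$, so $S(u)=\mathbb{N}$ is trivially invariant under $\phi$ and Corollary \ref{npc1} applies. It gives, for every $n\geq 1$,
\[
\dim\bigl(N((uC_\phi)^n)\bigr) = |\mathbb{N}-\phi_n(\mathbb{N})|,
\]
so the lemma is equivalent to the claim that $|\mathbb{N}-\phi(\mathbb{N})|$ is finite if and only if $|\mathbb{N}-\phi_n(\mathbb{N})|$ is finite for every $n\geq 1$.

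One direction is immediate: taking $n=1$ shows that finiteness for all $n$ forces finiteness of $\dim N(uC_\phi)=|\mathbb{N}-\phi(\mathbb{N})|$. The substance is the converse, so I would set $A=\mathbb{N}-\phi(\mathbb{N})$ and assume $|A|<\infty$, the goal being to control $\mathbb{N}-\phi_n(\mathbb{N})$. The main tool is the elementary fact that for any map $f$ and any set $B$ one has $f(\mathbb{N})-f(B)\subseteq f(\mathbb{N}-B)$; images do not commute with complements, but this one-sided inclusion is all that is needed, and it holds because $f(\mathbb{N})=f(B)\cup f(\mathbb{N}-B)$.

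Writing $\phi_n=\phi_{n-1}\circ\phi$ and substituting $\phi(\mathbb{N})=\mathbb{N}-A$, I get
\[
\phi_n(\mathbb{N}) = \phi_{n-1}(\mathbb{N}-A) \supseteq \phi_{n-1}(\mathbb{N})-\phi_{n-1}(A),
\]
and taking complements yields
\[
\mathbb{N}-\phi_n(\mathbb{N}) \subseteq \bigl(\mathbb{N}-\phi_{n-1}(\mathbb{N})\bigr)\cup \phi_{n-1}(A).
\]
Since $A$ is finite, each iterate image $\phi_{n-1}(A)$ is finite, so an induction on $n$ (base case $\mathbb{N}-\phi(\mathbb{N})=A$) shows that $\mathbb{N}-\phi_n(\mathbb{N})$ is contained in the finite set $\bigcup_{k=0}^{n-1}\phi_k(A)$ (with $\phi_0$ the identity map) and hence is itself finite. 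Translating back through the dimension formula completes the converse.

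The only delicate point I anticipate is keeping the direction of the inclusions straight when pushing complements through $\phi$; once the one-sided inclusion $f(\mathbb{N})-f(B)\subseteq f(\mathbb{N}-B)$ is in hand, the induction is routine and the rest follows from the finiteness of images of finite sets.
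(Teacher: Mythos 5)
Your proof is correct and takes essentially the same route as the paper: the paper bounds each piece of the telescoping decomposition $\mathbb{N}-\phi_n(S(u))=\bigcup_{k=1}^{n}\left(\phi_{k-1}(\mathbb{N})-\phi_k(S(u))\right)$ by the same one-sided inclusion $\phi_{k-1}(\mathbb{N})-\phi_k(S(u))\subseteq \phi_{k-1}\left(\mathbb{N}-\phi(S(u))\right)$ that drives your induction, so your inductive bound $\mathbb{N}-\phi_n(\mathbb{N})\subseteq\bigcup_{k=0}^{n-1}\phi_k(A)$ is just the unrolled form of the paper's argument. Both proofs then conclude via the dimension formula of Corollary \ref{npc1} (Remark \ref{npr1}), and the converse direction is the same trivial specialization to $n=1$.
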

	
	\begin{proof}
		Suppose $\text{dim}(N(uC_\phi))<\infty.$ Then by Corollary \ref{npc1}, $\mathbb{N}- \phi(S(u))$ is a finite set. Hence this together with the fact that $\phi_k(\mathbb{N})- \phi_{k+1}(S(u))\subseteq \phi_k\left(\mathbb{N}- \phi(S(u))\right)$ implies that $\phi_k(\mathbb{N})- \phi_{k+1}(S(u))$ is finite set for each $k\geq1$. Now $$\mathbb{N}-\phi_n(S(u))=\bigcup_{k=1}^{n}\left(\phi_{k-1}(\mathbb{N})- \phi_k(S(u))\right)$$ is a finite union of finite sets. Hence $\mathbb{N}- \phi_n(S(u))$ is a finite set. Therefore by Remark \ref{npr1}, $$\text{dim}(N((uC_\phi)^n))<\infty.$$
		Conversely, suppose that $\text{dim}(N((uC_\phi)^n)))<\infty$ for each $n\geq 1.$ Then $$\text{dim}(N(C_\phi))<\infty.$$ 
	\end{proof}

%
	
	The following corollary is the negation of the Lemma \ref{npl1} 
	\begin{corollary}\label{nullc1.04}
		Let $u:\mathbb{N} \to \mathbb{C}$ be such that $uC_\phi \in \mathcal{B}(\ell_p).$ Then $\text{dim}(N(uC_\phi))=\infty$ if and only if $\text{dim}(N((uC_\phi)^n)))=\infty$ for all $n\in \mathbb{N}.$
	\end{corollary}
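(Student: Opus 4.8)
The plan is to reduce everything to the elementary fact that the kernels of the iterates of any bounded operator form an increasing chain. Concretely, for any $f\in\ell_p$ one has that $(uC_\phi)^n f=0$ implies $(uC_\phi)^{n+1}f=(uC_\phi)\big((uC_\phi)^n f\big)=0$, so
$$N(uC_\phi)\subseteq N((uC_\phi)^2)\subseteq\cdots\subseteq N((uC_\phi)^n)\subseteq\cdots.$$
Taking dimensions yields the monotonicity $\text{dim}(N(uC_\phi))\leq \text{dim}(N((uC_\phi)^n))$ for every $n\geq 1$. I would record this nesting as the single structural input of the argument; it requires nothing beyond $uC_\phi\in\mathcal{B}(\ell_p)$, and in particular neither the assumption that $u$ be bounded away from zero nor the invariance of $S(u)$ under $\phi$ is needed here.

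With the chain in hand, both implications are immediate. For the forward direction, assuming $\text{dim}(N(uC_\phi))=\infty$, the inequality $\text{dim}(N((uC_\phi)^n))\geq \text{dim}(N(uC_\phi))=\infty$ forces $\text{dim}(N((uC_\phi)^n))=\infty$ for every $n$. For the converse, if $\text{dim}(N((uC_\phi)^n))=\infty$ for all $n$, then specialising to $n=1$ gives $\text{dim}(N(uC_\phi))=\infty$ at once, which establishes the stated equivalence.

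Finally, one could instead present the result as a literal contrapositive of Lemma \ref{npl1}, which is how the surrounding text frames it. Negating the biconditional there converts ``$\text{dim}(N(uC_\phi))<\infty$'' into ``$\text{dim}(N(uC_\phi))=\infty$'', and ``$\text{dim}(N((uC_\phi)^n))<\infty$ for each $n$'' into ``$\text{dim}(N((uC_\phi)^n))=\infty$ for some $n$''. The only real obstacle is that this route yields the existential ``for some $n$'' rather than the universal ``for all $n$'' demanded by the statement; closing that gap again relies on the monotonicity of the kernel chain, since an infinite-dimensional kernel at any stage already propagates back to the first stage and forward to all later ones. For this reason I would favour the direct nesting argument above, which is self-contained and makes transparent that the equivalence is a general operator-theoretic phenomenon rather than a special feature of weighted composition operators.
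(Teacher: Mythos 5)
Your proof is correct, and it takes a genuinely different (and more robust) route than the paper, which offers no independent argument at all: it simply declares the corollary to be ``the negation of Lemma \ref{npl1}''. That framing has exactly the two defects you identify. First, literal negation of the lemma's biconditional produces ``$\dim(N((uC_\phi)^n))=\infty$ for \emph{some} $n$'', not the universal statement claimed, and bridging that gap needs precisely the kernel-chain monotonicity $N(uC_\phi)\subseteq N((uC_\phi)^2)\subseteq\cdots$ that you supply. Second, Lemma \ref{npl1} carries the standing hypothesis that $u$ is bounded away from zero (its proof runs through Remark \ref{npr1} and Corollary \ref{npc1}), whereas the corollary as stated assumes only $uC_\phi\in\mathcal{B}(\ell_p)$; deriving it by negating the lemma would therefore silently import a hypothesis the statement does not have. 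Your nesting argument repairs both problems at once: it is valid for an arbitrary bounded linear operator, uses nothing about weighted composition operators, proves the universal direction directly, and gets the converse trivially at $n=1$. What the paper's route buys instead is quantitative content: under the lemma's hypotheses, Theorem \ref{npt2} identifies $\dim(N((uC_\phi)^n))$ as $|\mathbb{N}-\phi_n(S(u))|$, so the set-theoretic source of the infinitude is made explicit, which your general argument cannot provide. As a proof of the corollary as literally stated, yours is the correct and preferable one.
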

	
		The following theorem shows that the null space of a weighted composition operator is always a complemented subspace.
	\begin{theorem}
		Suppose $u:\mathbb{N} \to \mathbb{C}$ and $uC_\phi \in \mathcal{B}(\ell_p).$ Then $N(uC_\phi)$ is complemented.
	\end{theorem}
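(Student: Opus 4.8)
The plan is to exhibit an explicit closed complementary subspace arising from the natural coordinate splitting of $\ell_p$, rather than appeal to Theorems \ref{p4pre03} or \ref{p4pre04}: those cover only the finite-dimensional or finite-codimensional cases, whereas here $N(uC_\phi)$ may be simultaneously infinite-dimensional and of infinite codimension, so a direct construction is needed.

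First I would compute the null space directly. Since this theorem does not assume $S(u)$ invariant under $\phi$, I would argue from scratch instead of invoking Theorem \ref{npt1}. For $f=\sum_n f(n)\mychi_n\in\ell_p$ one has $(uC_\phi f)(n)=u(n)f(\phi(n))$, so $f\in N(uC_\phi)$ if and only if $u(n)f(\phi(n))=0$ for every $n$. As $u(n)\neq 0$ exactly on $S(u)$, this is equivalent to $f(\phi(n))=0$ for all $n\in S(u)$, that is, $f|_{\phi(S(u))}=0$. Hence
$$N(uC_\phi)=\{f\in\ell_p: f|_{\phi(S(u))}=0\}.$$

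Second, I would set $A=\mathbb{N}\setminus\phi(S(u))$ and $B=\phi(S(u))$, so that $\mathbb{N}=A\sqcup B$, and take as candidate complement the subspace $F=\{f\in\ell_p: f|_A=0\}$ of sequences supported on $B$. The central object is the coordinatewise truncation $P\colon\ell_p\to\ell_p$ defined by $Pf=f\cdot\mychi_A$. I would then verify the two defining properties of complementation. For the algebraic decomposition, each $f\in\ell_p$ splits uniquely as $f=f\mychi_A+f\mychi_B$ with $f\mychi_A\in N(uC_\phi)$ and $f\mychi_B\in F$, while $N(uC_\phi)\cap F=\{0\}$ is immediate. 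For closedness, $P$ is linear, idempotent ($P^2=P$), and bounded with $\|Pf\|_p\le\|f\|_p$; since $N(uC_\phi)=\mathcal{R}(P)$ and $F=N(P)=\mathcal{R}(I-P)$, both are the range of a bounded projection and are therefore closed. This yields $\ell_p=N(uC_\phi)\oplus F$, as required.

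There is no serious technical obstacle: the difficulty is conceptual rather than computational, namely recognizing that the null space is always a coordinate subspace $\ell_p(A)$, which is automatically complemented by the trivially bounded truncation projection. The only point that requires genuine care is the first step, establishing the null-space description \emph{without} the invariance hypothesis (which is unavailable here, unlike in Theorem \ref{npt1}); everything downstream of that characterization is routine.
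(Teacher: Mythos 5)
Your proposal is correct and follows essentially the same route as the paper: the paper likewise identifies $N(uC_\phi)=\{f\in\ell_p: f|_{\phi(S(u))}=0\}$ and complements it by the coordinate subspace $M=\{f\in\ell_p: f|_{\mathbb{N}\setminus\phi(S(u))}=0\}$, verifying $N(uC_\phi)\cap M=\{0\}$ and $N(uC_\phi)+M=\ell_p$ via the same splitting $f=f\mychi_A+f\mychi_B$. Your two additions --- deriving the null-space description directly so as to avoid the invariance hypothesis of Theorem \ref{npt1}, and exhibiting the bounded truncation projection $Pf=f\mychi_A$ to certify closedness (which the paper asserts with ``clearly'') --- are small but genuine tightenings of the same argument, not a different approach.
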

	\begin{proof}
		Let $uC_\phi \in \mathcal{B}(\ell_p).$ Then $$N(uC_\phi)=\{f\in \ell_p:f|_{\phi(S(u))}=0\}.$$
		Let $M=\{f\in \ell_p:f|_{\mathbb{N}-\phi(S(u))}=0\}.$ Clearly, $M$ is a closed set. \\\
		Let $f\in N(uC_\phi) \cap M.$ Then $$f|_{\phi(S(u))}=0 \text{ and } f|_{\mathbb{N}-\phi(S(u))}=0.$$
		This implies that $f=0.$ Hence $N(uC_\phi) \cap M=\{0\}.$
		
		Now let $f=\sum \limits_{n\in \mathbb{N}}f_n\mychi_n\in \ell_p.$ Then $$f=\sum \limits_{n\in (\mathbb{N}-\phi(S(u)))}f_n\mychi_n+\sum \limits_{n\in \phi(S(u))}f_n\mychi_n=g+h,$$ where $g\in N(uC_\phi)$ and $h\in M.$
		Hence $f\in N(uC_\phi)+M.$ Therefore $N(uC_\phi)$ is complemented.
		
	\end{proof}
	
	In the following theorem we find the codimension of $N(uC_\phi)$ in $\ell_p.$
	
	\begin{theorem}\label{nullt2.00}
		Let $u:\mathbb{N} \to \mathbb{C}$ be such that $uC_\phi \in \mathcal{B}(\ell_p).$ Then $$\text{dim}(\ell_p \textfractionsolidus N(uC_\phi))=\left|\phi(S(u))\right|.$$
	\end{theorem}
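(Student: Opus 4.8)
The plan is to exhibit a basis of the quotient $\ell_p \textfractionsolidus N(uC_\phi)$ indexed by $\phi(S(u))$, in direct analogy with the basis $\{\mychi_n : n\in\mathbb{N}-\phi_n(S(u))\}$ produced for the null space in Theorem \ref{npt2}. The first step is to record that
$$N(uC_\phi)=\{f\in\ell_p : f|_{\phi(S(u))}=0\},$$
which is just the $n=1$ instance of Theorem \ref{npt1}; note this needs no invariance hypothesis, since $(uC_\phi f)(m)=u(m)f(\phi(m))$ vanishes for every $m$ exactly when $f(\phi(m))=0$ for all $m\in S(u)$. Set $B=\phi(S(u))$ and let $\pi\colon \ell_p \to \ell_p\textfractionsolidus N(uC_\phi)$ be the canonical quotient map; I claim $\mathfrak{B}=\{\pi(\mychi_n) : n\in B\}$ is a basis for the quotient, so that its cardinality $|B|=|\phi(S(u))|$ is the asserted dimension.

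For linear independence, suppose $\{n_1,\dots,n_k\}\subseteq B$ and $\sum_{i=1}^k \lambda_i\,\pi(\mychi_{n_i})=0$. Then $g:=\sum_{i=1}^k \lambda_i\mychi_{n_i}$ lies in $N(uC_\phi)$, hence $g|_B=0$, and evaluating at $n_j\in B$ forces $\lambda_j=g(n_j)=0$ for each $j$. For spanning, given $f=\sum_{n\in\mathbb{N}}f(n)\mychi_n\in\ell_p$ I would split $f=g+h$ with $g=\sum_{n\in\mathbb{N}-B}f(n)\mychi_n$ and $h=\sum_{n\in B}f(n)\mychi_n$; since $g|_B=0$ we have $g\in N(uC_\phi)$, so $\pi(f)=\pi(h)=\sum_{n\in B}f(n)\,\pi(\mychi_n)$, which shows $\mathfrak{B}$ spans.

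A cleaner alternative, reusing the complementation established in the preceding theorem, is to observe that $\ell_p=N(uC_\phi)\oplus M$ with $M=\{f\in\ell_p:f|_{\mathbb{N}-B}=0\}$, whence $\ell_p\textfractionsolidus N(uC_\phi)\cong M$; the family $\{\mychi_n:n\in B\}$ is then a basis for $M$ by the argument of Theorem \ref{npt2}, giving $\dim M=|B|$.

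I expect the only delicate point to be the meaning of \emph{dimension} when $B$ is infinite: the spanning step produces the (possibly infinite) series $\sum_{n\in B}f(n)\,\pi(\mychi_n)$, which converges in the quotient norm because $\pi$ is continuous and $h\in\ell_p$, so ``basis'' here must be read in the Schauder sense, exactly as it is used implicitly in Theorem \ref{npt2}. Under that reading the count is $|B|=|\phi(S(u))|$; in the finite case $\mathfrak{B}$ is an honest Hamel basis and the equality $\dim(\ell_p\textfractionsolidus N(uC_\phi))=|\phi(S(u))|$ holds with no caveat.
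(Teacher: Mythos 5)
Your proposal is correct and takes essentially the same route as the paper: the paper likewise takes $\mathfrak{B}=\{\mychi_n+N(uC_\phi):n\in\phi(S(u))\}$ as the candidate basis, proves independence by evaluating the representative $g=\sum_i\lambda_i\mychi_{n_i}\in N(uC_\phi)$ at the points $n_j\in\phi(S(u))$, and proves spanning by discarding the part of $f$ supported off $\phi(S(u))$ modulo the null space. Your two side remarks --- that the $n=1$ description of $N(uC_\phi)$ requires no invariance of $S(u)$ under $\phi$, and that when $\phi(S(u))$ is infinite the spanning step produces an infinite series so ``basis'' must be read in the Schauder sense --- are sound clarifications of points the paper's proof leaves implicit.
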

	\begin{proof}
		 Observe that for each $n\in \phi(S(u)),$  
		 $$\phi^{-1}(n) \cap S(u) \neq \emptyset.$$
		 
		Hence by Theorem \ref{npt1} $$\mychi_n \notin N(uC_\phi)) \quad \text{ for each } n \in \phi(S(u)).$$
		We claim that $\mathfrak{B}=\{\mychi_{n}+N(uC_\phi):n \in \phi(S(u))\}$ is a basis for $\ell_p \textfractionsolidus N(uC_\phi).$
		Let $\lambda_{1}, \lambda_{2},..., \lambda_{k}\in \mathbb{F}$ and $n_1, n_2,...n_k \in \phi(S(u))$ be such that
		$$\sum_{i=1}^{k}\lambda_i(\mychi_{n_i}+N(uC_\phi))=N(uC_\phi).$$
		Then $\sum_{i=1}^{k}\lambda_i\mychi_{n_i} \in N(uC_\phi).$
		
		Let $g=\sum_{i=1}^{k}\lambda_i\mychi_{n_i}.$ Since $g\in N(uC_\phi),$ then $g|_{\phi(S(u))}=0.$ Therefore for each $1 \leq j \leq k,$ $$g(n_j)=\lambda_j=0.$$
		This implies that $\mathfrak{B}$ is linearly independent.

		Now let $f+N(uC_\phi)=\sum\limits_{n\in \mathbb{N}}f(n)\mychi_n+N(uC_\phi) \in \ell_p \textfractionsolidus N(uC_\phi).$ Then \begin{align*}
			f+N(uC_\phi)&=\sum\limits_{n\in \phi(S(u))}f(n)\mychi_n+\sum\limits_{n\notin \phi(S(u))}f(n)\mychi_n+ N(uC_\phi)\\&=\sum\limits_{n\in \phi(S(u))}f(n)\mychi_n+N(uC_\phi).
		\end{align*}
		Hence $\mathfrak{B}$ is a spanning set for $\ell_p \textfractionsolidus N(uC_\phi).$ Therefore $$\text{dim}(\ell_p \textfractionsolidus N(uC_\phi))=\left|\phi(S(u))\right|.$$
	\end{proof}

	The following lemma is the direct consequence of the above Theorem \ref{nullt2.00}. In this lemma, we show that codimension of $N(uC_\phi)$ in $\ell_p$ is infinite if and only if $ \phi(S(u))$ is an infinite set. Hence the converse of the Theorem \ref{p4pre04} is not true.
	\begin{lemma}\label{nullc1.05}
		Suppose $u:\mathbb{N} \to \mathbb{C}$ and $uC_\phi \in \mathcal{B}(\ell_p).$ Then $\ell_p \textfractionsolidus N(uC_\phi)$ is infinite dimensional if and only if $\phi(S(u))$ is an infinite set.
	\end{lemma}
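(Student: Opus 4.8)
The plan is to apply the preceding Theorem \ref{nullt2.00} as a black box and reduce the statement to a trivial cardinality dichotomy. That theorem establishes the exact formula $\text{dim}(\ell_p \textfractionsolidus N(uC_\phi))=\left|\phi(S(u))\right|$, so the entire content of this lemma is the observation that a set (here $\phi(S(u))$, a subset of $\mathbb{N}$) is either finite or countably infinite, and that this cardinality is precisely what the codimension equals. Thus there is essentially nothing left to prove beyond unwinding definitions.

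First I would invoke Theorem \ref{nullt2.00} to write $\text{dim}(\ell_p \textfractionsolidus N(uC_\phi))=\left|\phi(S(u))\right|$. Then I would argue the two directions of the equivalence directly from this equality. If $\phi(S(u))$ is an infinite set, then since $\phi(S(u)) \subseteq \mathbb{N}$ it is countably infinite, so $\left|\phi(S(u))\right|=\infty$, and hence $\ell_p \textfractionsolidus N(uC_\phi)$ is infinite dimensional. Conversely, if $\ell_p \textfractionsolidus N(uC_\phi)$ is infinite dimensional, then $\left|\phi(S(u))\right|=\text{dim}(\ell_p \textfractionsolidus N(uC_\phi))=\infty$, which forces $\phi(S(u))$ to be an infinite set. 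This symmetric use of the dimension formula closes both implications at once.

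Honestly, there is no real obstacle here: the lemma is a direct corollary, and the only thing to be careful about is the bookkeeping that $\left|\phi(S(u))\right|$ denotes the cardinality of a subset of $\mathbb{N}$, so that ``infinite cardinality'' and ``infinite dimension of the quotient'' are literally the same statement via the formula. I would present the argument in two or three short sentences, pointing to Theorem \ref{nullt2.00} as the source of the dimension count and then reading off the equivalence. If one wanted to make the write-up self-contained, one could recall that the basis $\mathfrak{B}=\{\mychi_{n}+N(uC_\phi):n \in \phi(S(u))\}$ constructed in the proof of Theorem \ref{nullt2.00} is in bijection with $\phi(S(u))$, so the quotient has a countable basis exactly when $\phi(S(u))$ is finite; but citing the dimension equality is cleaner and suffices.
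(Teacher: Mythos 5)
Your proposal is correct and matches the paper exactly: the paper offers no separate argument, stating the lemma is a direct consequence of the dimension formula $\text{dim}(\ell_p \textfractionsolidus N(uC_\phi))=\left|\phi(S(u))\right|$ from Theorem \ref{nullt2.00}, which is precisely your reduction. One trivial slip in your closing aside: the quotient has a \emph{finite} (not ``countable'') basis exactly when $\phi(S(u))$ is finite, since the basis $\{\mychi_n+N(uC_\phi):n\in\phi(S(u))\}$ is countable in both cases.
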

	
	Now we give below two examples- one in which co-dimension of $N(uC_\phi)$ is infinite and another- in which  $N(uC_\phi)$ has finite co-dimension. 
	
	\begin{example}
		 Define $u:\mathbb{N} \to \mathbb{C}$ as $$ u(n)=\left\{\begin{array}{rcl} 0 & \mbox{if} & n  \text{ is odd } \\ \frac{1}{n^3} & \mbox{if} & n \text{ is even } 
	\end{array}\right.$$
	and define $\phi:\mathbb{N} \to \mathbb{N}$ as $\phi(n)=n^2.$ 
	Then, we have $S(u)=\{n:n \text{ is even natural number }\}$ and $\phi(S(u))$ is an infinite set.
	Hence by Lemma \ref{nullc1.05},$$\text{dim}(\ell_p \textfractionsolidus N(uC_\phi))=\infty.$$	
	\end{example}
	
	\begin{example}
		Define $u:\mathbb{N} \to \mathbb{C}$ as $u(n)=\frac{1}{n^2}$ and $\phi:\mathbb{N} \to \mathbb{N}$ as $\phi(n)=1.$ Then, we have $S(u)=\mathbb{N}$ and $\phi(S(u))=\{1\}.$ Hence by Theorem \ref{nullt2.00}, $$\text{dim}(\ell_p \textfractionsolidus N(uC_\phi))=1.$$
	\end{example}
	
\section{ \textbf{ Weighted composition operator on $\ell_p~(1\leq p<\infty)$ spaces whose range space is of finite co-dimension } } \label{sec4.3}
In this section, we characterize weighted composition operators on $\ell_p$ space whose range space has finite co-dimension. Thereafter, we use the forgoing characterization to give a class of weighted composition operators whose range space is a complemented subspace. \\
We begin with the following lemma, which shows that the range space of a weighted composition operator $uC_\phi$ on $\ell_p$ is always infinite dimensional. 

\begin{theorem}
Suppose $u:\mathbb{N} \to \mathbb{C}$ and $uC_\phi \in \mathcal{B}(\ell_p).$ Then dimension of $\mathcal{R}(uC_\phi)$ is infinite if $\{n:\phi^{-1}(n)\cap S(u) \neq \emptyset\}$ is an infinite subset of $\mathbb{N}.$	
\end{theorem}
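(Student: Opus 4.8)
The plan is to produce an infinite linearly independent family inside $\mathcal{R}(uC_\phi)$ directly, using the images of the standard basis vectors, and the fact that images of distinct basis vectors have disjoint supports. Write $A=\{n:\phi^{-1}(n)\cap S(u)\neq\emptyset\}$ and assume $A$ is infinite. For each $m\in\mathbb{N}$ set $g_m:=uC_\phi(\mychi_m)$, which lies in $\mathcal{R}(uC_\phi)$. By the definition of the operator, $g_m(j)=u(j)\,\mychi_m(\phi(j))$, so $g_m(j)=u(j)$ when $\phi(j)=m$ and $g_m(j)=0$ otherwise; in particular $S(g_m)\subseteq\phi^{-1}(m)$.

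First I would verify that $g_m\neq 0$ precisely when $m\in A$: indeed $g_m\neq 0$ if and only if there is some $j$ with $\phi(j)=m$ and $u(j)\neq 0$, that is, if and only if $\phi^{-1}(m)\cap S(u)\neq\emptyset$. Hence $\{g_m:m\in A\}$ is a family of nonzero elements of $\mathcal{R}(uC_\phi)$ indexed by the infinite set $A$.

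The key structural observation is that these functions have pairwise disjoint supports: since $\phi$ is a single-valued map, the preimages $\phi^{-1}(m)$, $m\in A$, are pairwise disjoint, and $S(g_m)\subseteq\phi^{-1}(m)$. Nonzero functions with pairwise disjoint supports are linearly independent, which I would check by the standard evaluation argument: given a finite relation $\sum_{i=1}^{r}\lambda_i g_{m_i}=0$ with the $m_i\in A$ distinct, choose for each $i$ an index $j_i\in S(g_{m_i})$; evaluating the relation at $j_i$ annihilates every term except the $i$-th by disjointness of supports, leaving $\lambda_i g_{m_i}(j_i)=0$, and since $g_{m_i}(j_i)\neq 0$ we conclude $\lambda_i=0$.

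Therefore $\{g_m:m\in A\}$ is an infinite linearly independent subset of $\mathcal{R}(uC_\phi)$, whence $\text{dim}(\mathcal{R}(uC_\phi))=\infty$. I do not anticipate a serious obstacle here; the argument is elementary once one notices the disjointness of the preimages. The only point requiring mild care is that it is genuine algebraic linear independence (not merely topological independence of the closed span) that is needed to force infinite dimension, and this is exactly what the finite-relation evaluation argument delivers.
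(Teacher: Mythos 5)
Your proposal is correct and follows essentially the same route as the paper: both exhibit the family $uC_\phi(\mychi_m)=u\mychi_{\phi^{-1}(m)}$ for $m$ in the infinite set $A=\{n:\phi^{-1}(n)\cap S(u)\neq\emptyset\}$ and prove linear independence by evaluating a finite relation at a point of $\phi^{-1}(m_i)\cap S(u)$. Your explicit remark that the supports $S(g_m)\subseteq\phi^{-1}(m)$ are pairwise disjoint is exactly the fact the paper uses implicitly in that evaluation step, so the two arguments are the same in substance.
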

\begin{proof}
	Let $A=\{n:\phi^{-1}(n)\cap S(u) \neq \emptyset\}.$
 Observe that for each $n\in A$ $$u\mychi_{\phi^{-1}(n)}=uC_\phi(\mychi_n)\in \mathcal{R}(uC_\phi).$$
 We claim that $\mathfrak{B}=\{u\mychi_{\phi^{-1}(n)}: n\in A\}$ is linearly independent. 
 Let $\lambda_{1}, \lambda_{2},..., \lambda_{k}\in \mathbb{F}$ and $n_1, n_2,...n_k \in A$ be such that $$\lambda_{1}u\mychi_{\phi^{-1}(n_1)}+\lambda_{2}u\mychi_{\phi^{-1}(n_2)}+...+\lambda_{k}u\mychi_{\phi^{-1}(n_k)}=0.$$
 Let $g=\lambda_{1}u\mychi_{\phi^{-1}(n_1)}+\lambda_{2}u\mychi_{\phi^{-1}(n_2)}+...+\lambda_{k}u\mychi_{\phi^{-1}(n_k)}.$ Since $\phi^{-1}(n_i)\cap S(u) \neq \emptyset$ for each $1\leq i \leq k.$ Therefore for $m\in \phi^{-1}(n_i)\cap S(u),$ \begin{align*}
 	g(m)&=\lambda_{i}u(m)\mychi_{\phi^{-1}(n_i)}(m)\\&=\lambda_{i}u(m)
 \end{align*}
 Since $g=0$ and $m\in S(u),$ therefore $$g(m)=0 ~\text{ implies }~ \lambda_{i}=0~ \text{ for each } 1\leq i \leq k.$$
 
Hence $\mathfrak{B}$ is a linearly independent subset of $\mathcal{R}(uC_\phi).$ 
\end{proof}

The following corollary is the direct consequence of the above theorem
\begin{corollary}
	 Suppose $u:\mathbb{N} \to \mathbb{C}$ is bounded away from zero and $uC_\phi \in \mathcal{B}(\ell_p).$ Then $\mathcal{R}(uC_\phi)$ is infinite dimensional.
\end{corollary}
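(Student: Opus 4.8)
The plan is to deduce this directly from the preceding theorem by verifying its hypothesis, namely that $A := \{n : \phi^{-1}(n) \cap S(u) \neq \emptyset\}$ is an infinite subset of $\mathbb{N}$. Since $u$ is bounded away from zero, there is a constant $c > 0$ with $|u(m)| \geq c$ for every $m \in \mathbb{N}$, so $u$ never vanishes and hence $S(u) = \mathbb{N}$. Consequently $\phi^{-1}(n) \cap S(u) = \phi^{-1}(n)$, and $A$ reduces to the range $\phi(\mathbb{N})$ of $\phi$. Thus it suffices to show that $\phi(\mathbb{N})$ is infinite.

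The key step is to rule out the possibility that $\phi$ takes only finitely many values; this is where the boundedness of $uC_\phi$ is essential (for a constant map $\phi$ the range $\phi(\mathbb{N})$ is finite, but then $uC_\phi$ fails to be bounded, so such maps are excluded by hypothesis). By Theorem \ref{p4pre01}, boundedness of $uC_\phi$ furnishes a constant $K > 0$ with $\sum_{m \in \phi^{-1}(n)} |u(m)|^p \leq K$ for all $n$. Combining this with $|u(m)| \geq c$ yields $|\phi^{-1}(n)|\, c^p \leq K$, so each fiber $\phi^{-1}(n)$ has at most $K/c^p$ elements and is in particular finite.

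Finally, I would observe that $\mathbb{N} = \bigcup_{n \in \phi(\mathbb{N})} \phi^{-1}(n)$ is a decomposition of $\mathbb{N}$ into nonempty fibers. If $\phi(\mathbb{N})$ were finite, then $\mathbb{N}$ would be a finite union of finite sets and hence finite, a contradiction. Therefore $\phi(\mathbb{N}) = A$ is infinite, and the preceding theorem applies to conclude that $\mathcal{R}(uC_\phi)$ is infinite dimensional. The only delicate point — and the one place the hypothesis genuinely bites — is the finiteness of the fibers, which forces infinitely many of them to be nonempty; everything else is bookkeeping.
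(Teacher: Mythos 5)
Your proof is correct and follows the paper's route: the paper offers no written proof, stating the corollary as a direct consequence of the preceding theorem, and you deduce it the same way by noting that $u$ bounded away from zero gives $S(u)=\mathbb{N}$, so the set $\{n:\phi^{-1}(n)\cap S(u)\neq\emptyset\}$ is exactly $\phi(\mathbb{N})$. Your additional step --- invoking Theorem \ref{p4pre01} to bound each fiber by $|\phi^{-1}(n)|\leq K/c^{p}$, which forces $\phi(\mathbb{N})$ to be infinite --- is precisely the detail the paper leaves implicit, and you verify it correctly.
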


\begin{theorem}
	Let $u:\mathbb{N} \to \mathbb{C}$ and $\phi:\mathbb{N} \to \mathbb{N}$ be such that $S(u)$ is invariant under $\phi$ and both $C_\phi$ and $uC_\phi$ are bounded linear operators on $\ell_p.$ Suppose $M_k={\phi^{-1}(k)}\bigcap S(u)$ for each $k\in \mathbb{N}.$ Then range space $\mathcal{R}(uC_\phi)$ of $uC_\phi$ is given by
	\begin{multline*} \mathcal{R}(uC_\phi)=\left\{f\in \ell_p:\sum_{k\in S(u)}|M_k|\left|\left(\frac{f}{u}\right)(k)\right|^p<\infty, f(k)=0 \text{ whenever } k\in \mathbb{N}-S(u) \right.\\ \left.\text{ and } \right.\left.\left.\left(\frac{f}{u}\right)\right|_{M_k}=\text{ constant for each } k\in \mathbb{N}\right\}
	\end{multline*}
\end{theorem}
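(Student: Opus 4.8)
The plan is to prove the two inclusions separately, working from the pointwise description $(uC_\phi h)(m)=u(m)\,h(\phi(m))$ of the operator (the form consistent with Theorem \ref{p4pre01}). Throughout I will write $g$ for a generic member of the set on the right-hand side (the statement's $f$) and reserve $h$ for a candidate preimage, so that $g=uC_\phi h$ means $g(m)=u(m)\,h(\phi(m))$ for every $m$. The guiding idea is that a preimage is \emph{forced} on the fibres: for $m\in M_k$ the value $h(k)$ must equal the common value of $g/u$ on $M_k$, and conversely prescribing these fibre–constants reconstructs a preimage. Two preliminary observations will be used repeatedly. First, the invariance $\phi(S(u))\subseteq S(u)$ gives $M_k\neq\emptyset\Rightarrow k\in S(u)$, so only indices $k\in S(u)$ carry information. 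Second, boundedness of $C_\phi$ yields, via Theorem \ref{p4pre01} applied with $u\equiv 1$, a finite bound $N:=\sup_{k}|\phi^{-1}(k)|<\infty$, whence $|M_k|\le|\phi^{-1}(k)|\le N$ for every $k$.

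For the inclusion $\mathcal{R}(uC_\phi)\subseteq\{\dots\}$, take $g=uC_\phi h$ with $h\in\ell_p$. If $k\notin S(u)$ then $u(k)=0$ forces $g(k)=0$, which is the vanishing condition. For $m\in S(u)$ we have $(g/u)(m)=h(\phi(m))$, so if $m,m'\in M_k$ then $(g/u)(m)=h(k)=(g/u)(m')$; thus $g/u$ is constant on each $M_k$, with common value $c_k:=h(k)$, giving the constancy condition. Summing this common value against its multiplicity and invoking the uniform fibre bound,
\[
\sum_{k\in S(u)}|M_k|\,|c_k|^p=\sum_{k}|M_k|\,|h(k)|^p\le N\sum_{k}|h(k)|^p=N\,\|h\|_p^p<\infty ,
\]
which is exactly the stated summability condition (read with $(f/u)(k)$ as the common value $c_k$ of $f/u$ on $M_k$).

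For the reverse inclusion, let $g$ satisfy the three conditions and define $h(k)$ to be the common value of $g/u$ on $M_k$ when $M_k\neq\emptyset$, and $h(k)=0$ otherwise; the constancy condition makes this well defined. Since $|M_k|\ge 1$ on nonempty fibres, $\|h\|_p^p=\sum_{k:\,M_k\neq\emptyset}|h(k)|^p\le\sum_{k\in S(u)}|M_k|\,|h(k)|^p<\infty$, so $h\in\ell_p$. It remains to verify $uC_\phi h=g$ pointwise: for $m\in S(u)$ one has $m\in M_{\phi(m)}$, so $h(\phi(m))$ equals the fibre–constant $(g/u)(m)$ and therefore $(uC_\phi h)(m)=u(m)\,(g/u)(m)=g(m)$; for $m\notin S(u)$ both sides vanish by the vanishing condition. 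Hence $g\in\mathcal{R}(uC_\phi)$.

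The only genuinely delicate point is the summability bookkeeping, and this is precisely where the hypothesis that $C_\phi$ (and not merely $uC_\phi$) is bounded becomes essential. One must pass between $\sum_k|h(k)|^p$ (membership of the reconstructed preimage in $\ell_p$) and $\sum_k|M_k|\,|h(k)|^p$ (the stated condition, which equals $\sum_{m\in S(u)}|(g/u)(m)|^p$ after regrouping over the disjoint decomposition $S(u)=\bigsqcup_k M_k$). These two quantities are comparable exactly because $1\le|M_k|\le N$; without a uniform bound on the fibre sizes they could diverge from one another, and the characterization would fail. The remaining items—well-definedness of $h$ from the constancy condition and the pointwise identity $uC_\phi h=g$—are routine and present no real obstruction.
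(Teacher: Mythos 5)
Your proof is correct and takes essentially the same route as the paper: both directions proceed by the forced fibre-constant reconstruction of a preimage (for $m\in M_k$ the value $h(k)$ must be the common value of $g/u$ on $M_k$), with boundedness of $C_\phi$ supplying the uniform bound $N=\sup_k|\phi^{-1}(k)|$ for the summability estimate and $|M_k|\geq 1$ on nonempty fibres for the reverse estimate. Your explicit convention of reading $\left(\frac{f}{u}\right)(k)$ in the summability condition as the common fibre value $c_k$ is in fact exactly what the paper's converse tacitly requires when it asserts that the reconstructed preimage ``easily'' lies in $\ell_p$: under the strictly literal pointwise reading, indices $k$ with $M_k=\emptyset$ contribute weight $0$, so the condition can hold (even vanish identically, e.g.\ when $f$ is supported off $\phi(S(u))$ and $u$ is small there) while the forced preimage fails to be $p$-summable; your flagged reinterpretation, equivalent to $\sum_{m\in S(u)}\left|\left(\frac{f}{u}\right)(m)\right|^p<\infty$ via the disjoint decomposition $S(u)=\bigcup_k M_k$, is the reading under which the characterization is true, so your care on this point strengthens rather than deviates from the paper's argument.
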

\begin{proof}
	Let $f \in \mathcal{R}(uC_\phi).$ Then there exists $g \in \ell_p$ such that
	
	$$uC_\phi(g)=f.$$ \\ 
	This implies that $$f(k)=0 \text{ whenever } k\in \mathbb{N}-S(u) \text{ and } \left(\frac{f}{u}\right)(k)=(g\circ\phi)(k) \text{ for each } k\in S(u).$$\\
	Now, as $\{|M_k|:k\in \mathbb{N}\}$ is a bounded subset of $\mathbb{N}$ and $g\circ \phi\in \ell_p,$ therefore it follows that $$\sum_{k\in S(u)}|M_k|\left|\left(\frac{f}{u}\right)(k)\right|^p<\infty.$$	
	
	Further, let $m_1, m_2\in M_k.$ Then 
	$(g\circ \phi)(m_1)=(g\circ \phi)(m_2).$ Hence $$\left(\frac{f}{u}\right)(m_1)=\left(\frac{f}{u}\right)(m_2).$$
	
	This implies that $$\left(\frac{f}{u}\right)\Big|_{M_k}=\text{ constant }.$$
	
	Conversely, suppose $f\in \ell_p,~\sum\limits_{k\in S(u)}|M_k|\left|\left(\frac{f}{u}\right)(k)\right|^p<\infty,$ $$f(k)=0 \text{ whenever } k\in \mathbb{N}-S(u) \text{ and } \left(\frac{f}{u}\right)\Big|_{M_k}=\text{ constant for each }k\in \mathbb{N}.$$\\
	Define $g:\mathbb{N} \to \mathbb{C}$ as follow 
	$$g(k)=\begin{cases}
		\left(\frac{f}{u}\right)(m), & \text{ for some }m\in M_k \text{ if } M_k \text{ is non-empty }\\
		0, & \text{ if } M_k \text{ is empty. }
	\end{cases}$$
	Then $g$ is well defined as $\left(\frac{f}{u}\right)$ is constant on $M_k$ for each $k\in \mathbb{N}.$ From the above conditions on $f,$ it can be easily seen that $g$ belongs to $\ell_p.$\\
	Further, 
	\begin{align*}
		uC_\phi(g)(k)=&u(k)g(\phi(k))\\=&f(k) \text{ for all }k\in \mathbb{N}.
	\end{align*}
	Thus $uC_\phi(g)=f.$ Hence $f\in \mathcal{R}(uC_\phi).$
\end{proof}

The following theorem characterizes the weighted composition operators on $\ell_p$ whose range has finite co-dimension.

\begin{theorem}\label{complet0.01}
	Let $u:\mathbb{N} \to \mathbb{C}$ and $\phi:\mathbb{N} \to \mathbb{N}$ be such that $C_\phi$ and $uC_\phi$ are both bounded linear operators on $\ell_p.$ Then $\mathcal{R}(uC_\phi)$ has finite co-dimension in $\ell_p$ if and only if\\ $\{n\in \mathbb{N}:|\phi^{-1}(n)\cap S(u)|>1\}$ is a non-empty finite set.
\end{theorem}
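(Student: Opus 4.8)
The plan is to analyse $\mathcal{R}(uC_\phi)$ fibre by fibre. Put $M_n=\phi^{-1}(n)\cap S(u)$ and note that the generators satisfy $uC_\phi(\mychi_n)=u\,\mychi_{\phi^{-1}(n)}$, a vector supported exactly on $M_n$. Since the sets $M_n$ are pairwise disjoint with $\bigcup_n M_n=S(u)$, a direct computation shows that $f\in\mathcal{R}(uC_\phi)$ if and only if $f$ vanishes on $\mathbb{N}\setminus S(u)$, the quotient $f/u$ is constant on each $M_n$, and the coefficient sequence $g$ given by $g(n)=(f/u)|_{M_n}$ lies in $\ell_p$. In particular, on each block $\ell_p(M_n)$ the range occupies only the single direction $u|_{M_n}$, so the ``local'' cokernel over $M_n$ has dimension $|M_n|-1$. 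Boundedness of $C_\phi$ gives, via Theorem \ref{p4pre01} applied with $u\equiv 1$, a uniform bound $\sup_n|\phi^{-1}(n)|<\infty$, hence $|M_n|\le N$ for all $n$; this is what keeps every local contribution finite and will be used repeatedly.

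For the forward implication I would argue by contraposition. Suppose $B:=\{n:|M_n|>1\}$ is infinite; I then produce infinitely many linearly independent classes in $\ell_p\textfractionsolidus\mathcal{R}(uC_\phi)$. For each $n\in B$ fix two distinct points $m_n,m_n'\in M_n$ and consider the coset $\mychi_{m_n}+\mathcal{R}(uC_\phi)$. Given any finite combination $\sum_n c_n\mychi_{m_n}$, membership in the range would force $(\,\cdot\,/u)$ to be constant on every $M_n$; but this combination takes the value $c_n/u(m_n)$ at $m_n$ and the value $0$ at $m_n'$, so constancy forces each $c_n=0$. Hence the chosen cosets are linearly independent, the codimension is infinite, and finite codimension indeed forces $B$ to be finite.

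For the converse, assuming $B$ is finite (and non-empty), I would exhibit a finite spanning set for $\ell_p\textfractionsolidus\mathcal{R}(uC_\phi)$, namely the classes coming from $\mathbb{N}\setminus S(u)$ together with, for each $n\in B$, the $|M_n|-1$ classes recording the failure of constancy on $M_n$; by the uniform bound $|M_n|\le N$ the latter family is finite. That these classes span is seen by subtracting from an arbitrary $f$ a preimage built exactly as in the range description, leaving only finitely many ``defective'' coordinates. The step I expect to be the main obstacle is showing that the algebraically described subspace actually coincides with $\mathcal{R}(uC_\phi)$: one must verify that the recovered coefficient sequence $g$ genuinely lies in $\ell_p$ and that the range is closed. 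This is precisely where both hypotheses enter — boundedness of $C_\phi$ controls the fibre sizes while boundedness of $uC_\phi$ controls $\sum_{m\in\phi^{-1}(n)}|u(m)|^p$ through Theorem \ref{p4pre01} — and reconciling these quantitative estimates with the contribution of $\mathbb{N}\setminus S(u)$ is the delicate heart of the argument.
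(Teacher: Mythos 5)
Your range description of $\mathcal{R}(uC_\phi)$ is correct, and your contrapositive argument for the forward implication is sound and is essentially the paper's own: the paper likewise picks, for infinitely many $n_i$ with $|M_{n_i}|>1$, the coset of a ($u$-scaled) characteristic function at one point of the fibre and kills the coefficients using constancy of $f/u$ on each $M_{n_i}$; up to the harmless rescaling by $u(m_n)$ your argument and the paper's coincide. (Neither you nor the paper proves the ``non-empty'' part of the stated equivalence, and in fact it is not provable: $u\equiv 1$, $\phi=\mathrm{id}$ gives codimension $0$ with $B=\emptyset$.)

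The converse half, however, has a genuine gap, and it cannot be closed as stated. Your claimed finite spanning set includes ``the classes coming from $\mathbb{N}\setminus S(u)$'', but nothing in the hypotheses makes $\mathbb{N}\setminus S(u)$ finite; by your own range description every $f\in\mathcal{R}(uC_\phi)$ vanishes off $S(u)$, so the cosets $\mychi_m+\mathcal{R}(uC_\phi)$, $m\in\mathbb{N}\setminus S(u)$, are linearly independent, and whenever $S(u)$ is co-infinite the codimension is infinite no matter what $B$ is. Concretely, $u=\mychi_{\{1,2\}}$, $\phi(1)=\phi(2)=1$, $\phi(n)=n$ for $n\geq 3$ gives bounded $C_\phi$ and $uC_\phi$ with $B=\{1\}$ non-empty and finite, yet $\mathcal{R}(uC_\phi)$ is the one-dimensional span of $\mychi_{\{1,2\}}$. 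Moreover, the step you flag as ``the delicate heart'' — that the recovered coefficient sequence $g$ lies in $\ell_p$ — is a second, independent obstruction rather than a technicality: for $u(n)=1/n$, $\phi(1)=\phi(2)=1$, $\phi(n)=n-1$ for $n\geq 3$, one has $B=\{1\}$ and $S(u)=\mathbb{N}$, but $\mathcal{R}(uC_\phi)$ is, up to two coordinates, the range of an injective compact diagonal operator, which has infinite algebraic codimension (a range of finite codimension would be closed, forcing a compact operator to have finite rank). So the ``if'' direction genuinely requires extra hypotheses, e.g.\ $|\mathbb{N}\setminus S(u)|<\infty$ and $\inf_{n\in S(u)}|u(n)|>0$. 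You should be aware that the paper's own proof passes over exactly this point: in its spanning computation it silently replaces the coset $f+\mathcal{R}(uC_\phi)$ by $\sum_{n}u(n)f(n)\mychi_n+\mathcal{R}(uC_\phi)$, thereby only proving that its candidate basis spans the image of $u\cdot\ell_p$ in the quotient; your more careful fibre-by-fibre bookkeeping actually exposes the cosets that the paper's argument drops.
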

\begin{proof}
	Let $A=\{n\in \mathbb{N}:|\phi^{-1}(n)\cap S(u)|>1\}.$ 
	Suppose $A$ is a non-empty finite set. Let $A=\{n_1, n_2,...,n_k\}$ and suppose for each $i\in\{1, 2,...,k\}$  $$A_i=\phi^{-1}(n_i)\cap S(u)=\{m_{j}^i:j=1, 2,...,l_i\}$$
	Observe that $\sum\limits_{n\notin \phi^{-1}(A)}u(n)f(n)\mychi_n\in \mathcal{R}(uC_\phi).$ \\
	We claim that $\mathfrak{B}=\{\mathcal{R}(uC_\phi)+u(m_{j}^i)\mychi_{m_{j}^i}:i\in\{1, 2,...,k\} ~\text{and}~j=1, 2,...,l_i-1\}$ forms a Hamel basis for $\ell_p\textfractionsolidus \mathcal{R}(uC_\phi).$ We first prove that $\mathfrak{B}$ is linearly independent.\\
	Let $\lambda_{j}^{i}\in \mathbb{F}$ for all $1\leq i\leq k$ and $1\leq j \leq l_i-1$ be such that \\ $$\sum_{i=1}^{k}\left(\sum_{j=1}^{l_i-1}\lambda_{j}^{i}(\mathcal{R}(uC_\phi)+u(m_{j}^i)\mychi_{m_{j}^{i}})\right)=\mathcal{R}(uC_\phi).$$
	Then $$\sum_{i=1}^{k}\sum_{j=1}^{l_i-1}\lambda_{j}^{i}u(m_{j}^i)\mychi_{m_{j}^{i}}\in \mathcal{R}(uC_\phi).$$
	Let $g=\sum_{i=1}^{k}\sum_{j=1}^{l_i-1}\lambda_{j}^{i}u(m_{j}^i)\mychi_{m_{j}^{i}}.$ Then, for each $1\leq i\leq k \text{ and } 1\leq j\leq l_i-1$ $$\left(\frac{g}{u}\right)({m_{j}^{i}})=\lambda^i_j \text{ and } \left(\frac{g}{u}\right)({m^i_{l_i}})=0.$$
	Now $g\in \mathcal{R}(uC_\phi),$ therefore $\left(\frac{g}{u}\right)\Big|_{A_i}=\text{constant}$ for all $1\leq i\leq k.$
	This implies that $$\lambda^i_j=0 \text{ for all } 1\leq i\leq k \text{ and } 1\leq j \leq l_i-1.$$\\
	Now, we show $\mathfrak{B}$ generates $\ell_p\textfractionsolidus \mathcal{R}(uC_\phi).$ Let $ \mathcal{R}(uC_\phi)+f \in\ell_p\textfractionsolidus \mathcal{R}(uC_\phi) \text{ where } f=(f(1), f(2), f(3),...).$
	Then
	\begin{align*}
		\mathcal{R}(uC_\phi)+(f(1), f(2), f(3),...)&=\mathcal{R}(uC_\phi)+\sum\limits_{n\notin \phi^{-1}(A)}u(n)f(n)\mychi_n+\sum\limits_{n\in \phi^{-1}(A)}u(n)f(n)\mychi_n \\&=\mathcal{R}(uC_\phi)+\sum\limits_{n\in A_1}u(n)f(n)\mychi_n+\sum\limits_{n\in A_2}u(n)f(n)\mychi_n+...\\
		&\quad+\sum\limits_{n\in A_k}u(n)f(n)\mychi_n.
	\end{align*}
	Note that $$\sum\limits_{n\in A_i}u(n)f(m_{l_i}^i)\mychi_n\in \mathcal{R}(uC_\phi)\text{ for each }1\leq i \leq k.$$
	Therefore \begin{align*}
		\mathcal{R}(uC_\phi)+\sum\limits_{n\in \mathbb{N}}u(n)f(n)\mychi_n&=\mathcal{R}(uC_\phi)+\sum_{i=1}^{k}\left(\sum\limits_{n\in A_i}u(n)f(n)\mychi_n-\sum\limits_{n\in A_i}u(n)f(m_{l_i}^i)\mychi_n\right)\\&=\mathcal{R}(uC_\phi)+\sum_{i=1}^{k}\left(\sum\limits_{n\in A_i\setminus\{m_{l_i}^i\}}u(n)\left(f(n)-f(m_{l_i}^i)\right)\mychi_n\right)
	\end{align*}
	which is a linear combination of elements from $\mathfrak{B}.$	\\
	Therefore $\mathfrak{B}$ forms a Hamel basis for $\ell_p\textfractionsolidus \mathcal{R}(uC_\phi)$ with dimension $$\text{dim}\left.(\ell_p\textfractionsolidus \mathcal{R}(uC_\phi)\right.)=\sum\limits_{n\in A}\left(|\phi^{-1}(n_i)\cap S(u)|-1\right).$$

	Conversely, suppose the set $A$ is not finite. Then there exists a sequence $\{n_i\}_{i=1}^\infty$ of positive integers such that
	$|\phi^{-1}(n_i)\cap S(u)|>1$ for each $i\geq1.$
	Let $$A_i=\phi^{-1}(n_i)\cap S(u)=\{m_{j}^i:j=1, 2,...,l_i\} \text{ for each } i\geq1.$$\\
	We claim that $\mathfrak{B}=\left\{\mathcal{R}(uC_\phi)+u(m_1^{1})\mychi_{m_1^{1}}, \mathcal{R}(uC_\phi)+u(m_1^{2})\mychi_{m_1^{2}},...,\mathcal{R}(uC_\phi)+u(m_1^{k})\mychi_{m_1^{k}},...\right\}$ is a linearly independent set in $\ell_p\textfractionsolidus \mathcal{R}(uC_\phi).$

	Let $\lambda_{1}, \lambda_{2},..., \lambda_{k}\in \mathbb{F}$ be such that $\sum_{i=1}^{k}\lambda_i(\mathcal{R}(uC_\phi)+u(m_1^{i})\mychi_{m_1^{i}})=\mathcal{R}(uC_\phi).$ Then
	$$\sum_{i=1}^{k}\lambda_iu(m_1^{i})\mychi_{m_1^{i}} \in \mathcal{R}(uC_\phi).$$
	Let $g=\sum_{i=1}^{k}\lambda_iu(m_1^{i})\mychi_{m_1^{i}}.$ Then, for each $1\leq i\leq k $ $$\left(\frac{g}{u}\right)({m_{1}^{i}})=\lambda_i \text{ and } g\left(\frac{g}{u}\right)({m^i_{2}})=0.$$
	Now $g\in \mathcal{R}(uC_\phi),$ therefore $\left(\frac{g}{u}\right)\Big|_{A_i}=\text{constant}$ for each $i\geq 1.$
	This implies that $$\lambda_i=0 ~\text{ for all } ~1\leq i\leq k.$$
	This completes the proof.
\end{proof}

The following corollary is an immediate consequence of the above Theorem \ref{complet0.01}.
\begin{corollary}\label{{compleL0.01}}
	 Let $u:\mathbb{N} \to \mathbb{C}$ be such that $uC_\phi \in
	\mathcal{B}(\ell_p).$ Then $$\text{dim}(\ell_p\textfractionsolidus \mathcal{R}((uC_{\phi})^n))<\infty$$ if and only if $\{k\in \mathbb{N}:|\phi^{-1}_n(k)\cap S(u)|>1\}$ is a finite set.
\end{corollary}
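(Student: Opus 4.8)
The plan is to recognize that $(uC_\phi)^n$ is itself a weighted composition operator and then to invoke Theorem \ref{complet0.01} verbatim for this new operator. By the iteration formula in the Definition, $(uC_\phi)^n = w\,C_{\phi_n}$, where the weight is $w = u\cdot(u\circ\phi)\cdots(u\circ\phi_{n-1})$ and the symbol is the $n$-th iterate $\phi_n$. First I would record this identification explicitly and check the boundedness hypotheses needed to apply Theorem \ref{complet0.01} to the pair $(w,\phi_n)$: the operator $w\,C_{\phi_n}=(uC_\phi)^n$ is bounded on $\ell_p$ as a power of the bounded operator $uC_\phi$, while $C_{\phi_n}=(C_\phi)^n$ is bounded whenever $C_\phi$ is (which I take as part of the standing hypotheses carried over from Theorem \ref{complet0.01}).

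Second, I would compute the support of the iterated weight. Since $w(m)\neq 0$ precisely when $u$ fails to vanish at each of $m,\phi(m),\dots,\phi_{n-1}(m)$, we have
$$
S(w)=\bigcap_{j=0}^{n-1}\phi_j^{-1}\bigl(S(u)\bigr)\subseteq S(u),
$$
where $\phi_0$ denotes the identity. When $S(u)$ is invariant under $\phi$, every $m\in S(u)$ also satisfies $\phi_j(m)\in S(u)$ for all $j$, so the reverse inclusion holds and $S(w)=S(u)$. Consequently $\phi_n^{-1}(k)\cap S(w)=\phi_n^{-1}(k)\cap S(u)$ for every $k\in\mathbb{N}$, which is exactly what is needed to match the index set appearing in the Corollary with the one produced by Theorem \ref{complet0.01} for the operator $w\,C_{\phi_n}$.

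Finally, applying Theorem \ref{complet0.01} to $w\,C_{\phi_n}$ gives that $\mathcal{R}((uC_\phi)^n)=\mathcal{R}(w\,C_{\phi_n})$ has finite co-dimension in $\ell_p$ if and only if $\{k\in\mathbb{N}:|\phi_n^{-1}(k)\cap S(w)|>1\}$ is finite, and by the previous step this set equals $\{k\in\mathbb{N}:|\phi_n^{-1}(k)\cap S(u)|>1\}$, which yields the stated equivalence.

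The only genuine obstacle is the bookkeeping around the support and the hypotheses: one must be certain that $S(w)=S(u)$, which relies on the invariance of $S(u)$ under $\phi$, and one must confirm the boundedness of $C_{\phi_n}$ and $w\,C_{\phi_n}$ so that Theorem \ref{complet0.01} is legitimately applicable to the iterate. I would also remark that the discrepancy between the ``non-empty finite'' phrasing of Theorem \ref{complet0.01} and the plain ``finite'' phrasing here is harmless: the empty case corresponds to each fibre $\phi_n^{-1}(k)$ meeting $S(u)$ in at most one point, and it can be absorbed into the finite-co-dimension conclusion or treated separately without affecting the equivalence.
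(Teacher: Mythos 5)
Your proposal is correct and is essentially the paper's own argument: the paper offers no written proof beyond declaring the corollary ``an immediate consequence'' of Theorem \ref{complet0.01}, and the intended route is exactly yours --- view $(uC_\phi)^n$ as the weighted composition operator $w\,C_{\phi_n}$ with $w=u\cdot(u\circ\phi)\cdots(u\circ\phi_{n-1})$ and apply the theorem to the pair $(w,\phi_n)$. If anything you are more careful than the paper, since you make explicit two points it silently elides: that equating $S(w)$ with $S(u)$ (and hence the index set $\{k:|\phi_n^{-1}(k)\cap S(u)|>1\}$ with the one the theorem actually produces, $\{k:|\phi_n^{-1}(k)\cap S(w)|>1\}$) requires the invariance of $S(u)$ under $\phi$, a hypothesis the corollary's statement drops, and that the boundedness of $C_{\phi_n}$ must be checked before Theorem \ref{complet0.01} applies.
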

Now we give an example which satisfies Theorem \ref{complet0.01}. 
\begin{example}
		Let $u:\mathbb{N} \to \mathbb{C}$ be defined as $$ u(n)=\left\{\begin{array}{rcl} 0 & \mbox{if} & 1\leq n \leq 5 \\ 1+\frac{1}{n} & \mbox{if} & n>5
	\end{array}\right.$$
	and $\phi:\mathbb{N} \to \mathbb{N}$ be defined as
	$$ \phi(n)=\left\{\begin{array}{rcl} 1 & \mbox{if} & 1\leq n \leq 7 \\ n+1 & \mbox{if} & n>7
	\end{array}\right.$$
	Then $\{n\in \mathbb{N}:|\phi^{-1}(n) \cap S(u)|>1\}=\{1\}.$ Hence by Theorem \ref{complet0.01}, $$\text{dim}(\ell_p\textfractionsolidus \mathcal{R}(uC_\phi))=1.$$
\end{example}

The next theorem gives a class of weighted composition operators whose range is a complemented subspace.
\begin{theorem}
Let $u:\mathbb{N} \to \mathbb{C}$ be such that $uC_\phi \in \mathcal{B}(\ell_p)$ and $\mathcal{R}(uC_\phi)$ is closed. Then $\mathcal{R}(uC_\phi)$ is complemented if $\{n\in \mathbb{N}:|\phi^{-1}(n) \cap S(u)|>1\}$ is a finite set.
\end{theorem}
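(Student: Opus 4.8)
The plan is to identify $\mathcal{R}(uC_\phi)$ with an explicit closed subspace of $\ell_p$ and then to split that subspace off by a bounded projection whose construction uses the finiteness of $A:=\{n\in\mathbb N:|\phi^{-1}(n)\cap S(u)|>1\}$. Writing $M_k=\phi^{-1}(k)\cap S(u)$, observe that $S(u)$ is the disjoint union of the nonempty fibres $M_k$, and set
\[
W=\Big\{f\in\ell_p:\ f|_{\mathbb N\setminus S(u)}=0\ \text{ and }\ (f/u)|_{M_k}\ \text{is constant for every }k\Big\},
\]
which is closed because it is cut out by the continuous linear conditions $f(m)=0$ $(m\notin S(u))$ and $u(m_2)f(m_1)=u(m_1)f(m_2)$ $(m_1,m_2\in M_k)$. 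Note that this description needs only $uC_\phi\in\mathcal B(\ell_p)$, not the extra hypotheses used in the earlier range-description theorem.

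First I would prove $\mathcal{R}(uC_\phi)=W$. The inclusion $\mathcal{R}(uC_\phi)\subseteq W$ is immediate from $(uC_\phi g)(m)=u(m)\,g(\phi(m))$: this vanishes off $S(u)$, and on $M_k$ it equals $u(m)g(k)$, so $(uC_\phi g)/u\equiv g(k)$ there. For the reverse inclusion I would first show that $\mathcal{R}(uC_\phi)$ is dense in $W$: applying $uC_\phi$ to a finitely supported $g=\sum_{k\in K}c_k\mychi_k$ gives $\sum_{k\in K}c_k\,u\mychi_{M_k}\in\mathcal{R}(uC_\phi)$, and truncating the fibre expansion of an arbitrary $r\in W$ shows that such elements approximate $r$ in norm, since the tail $\sum_{k>N}|\beta_k|^p\,\|u\mychi_{M_k}\|_p^p$ of $\|r\|_p^p$ tends to $0$, where $\beta_k=(r/u)|_{M_k}$. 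As $\mathcal{R}(uC_\phi)$ is assumed closed and is dense in the closed space $W$, it follows that $\mathcal{R}(uC_\phi)=W$.

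It then remains to complement $W$ in $\ell_p$. Here I would use the coordinate splitting $\ell_p=\ell_p(\mathbb N\setminus S(u))\oplus\ell_p(S(u))$ together with the block decomposition $\ell_p(S(u))=\bigoplus_k\ell_p(M_k)$ (an $\ell_p$-sum over the nonempty fibres), under which $W$ becomes $\bigoplus_k\,\mathbb F\,u\mychi_{M_k}$, an $\ell_p$-sum of lines. When $|M_k|=1$ the line $\mathbb F\,u\mychi_{M_k}$ fills the whole block $\ell_p(M_k)$, so its block projection is the identity; when $k\in A$ the line is a proper one-dimensional subspace of $\ell_p(M_k)$, hence complemented by Theorem \ref{p4pre03}, and I fix a bounded projection $P_k$ of $\ell_p(M_k)$ onto it. Since $P_k$ differs from the identity only for the finitely many $k\in A$, the norms $\|P_k\|$ are uniformly bounded, so the block-diagonal map $Q$ (equal to $P_k$ on $\ell_p(M_k)$ and to $0$ on $\ell_p(\mathbb N\setminus S(u))$) is a bounded idempotent with range exactly $W$. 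Consequently $\ell_p=W\oplus\ker Q=\mathcal{R}(uC_\phi)\oplus\ker Q$, proving that $\mathcal{R}(uC_\phi)$ is complemented.

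I expect the identification $\mathcal{R}(uC_\phi)=W$ to be the main obstacle: the inclusion and the projection are routine coordinate bookkeeping, but the reverse inclusion requires combining the density computation with the closedness hypothesis, which is exactly where ``range closed'' does real work (without it $\mathcal{R}(uC_\phi)$ is only a dense, possibly proper, subspace of $W$). The finiteness of $A$ enters only at the last step, to secure the uniform bound $\sup_k\|P_k\|<\infty$ that makes $Q$ bounded; since $\mathbb N\setminus S(u)$ and the individual fibres $M_k$ may be infinite, the codimension of $\mathcal{R}(uC_\phi)$ can be infinite, so a direct appeal to Theorem \ref{p4pre04} would not suffice and the explicit projection $Q$ is genuinely needed.
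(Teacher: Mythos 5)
Your proposal is correct, but it takes a genuinely different route from the paper. The paper's own proof is two lines: finiteness of $A$ yields, via Theorem~\ref{complet0.01}, that $\mathcal{R}(uC_\phi)$ has finite codimension in $\ell_p$, and then Theorem~\ref{p4pre04} (a closed subspace of finite codimension is complemented) finishes. You instead identify $\mathcal{R}(uC_\phi)$ with the explicit closed space $W$ --- the inclusion $\mathcal{R}(uC_\phi)\subseteq W$ by direct computation, the reverse by combining density of the combinations $\sum_{k}c_k\,u\chi_{M_k}$ in $W$ with the closedness hypothesis --- and then build a bounded block-diagonal idempotent $Q$ onto $W$, using finiteness of $A$ only to secure $\sup_k\|P_k\|<\infty$; all steps of this check out. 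As for what each approach buys: the paper's argument is much shorter, but it inherits the hypotheses of Theorem~\ref{complet0.01} (boundedness of $C_\phi$, and $A$ non-empty), which the present statement does not assume, and --- more seriously --- the finite-codimension conclusion it relies on can fail under the stated hypotheses: every element of $\mathcal{R}(uC_\phi)$ vanishes off $S(u)$, so if $\mathbb{N}\setminus S(u)$ is infinite the cosets $\chi_m+\mathcal{R}(uC_\phi)$, $m\notin S(u)$, are already linearly independent in $\ell_p/\mathcal{R}(uC_\phi)$ and the codimension is infinite even though $A$ is finite (take $u=\chi_{\{1,2\}}$, $\phi(1)=\phi(2)=1$, $\phi(n)=n$ otherwise: the range is the line spanned by $\chi_1+\chi_2$, closed, with $A=\{1\}$). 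Your closing paragraph anticipates exactly this failure mode, and your explicit projection is what makes the theorem go through in full generality, additionally covering $A=\emptyset$ and infinite fibres $M_k$ (possible since only $uC_\phi$, not $C_\phi$, is assumed bounded, so only $\sum_{m\in\phi^{-1}(k)}|u(m)|^p$ need be controlled). In short, your route is longer than the paper's but is not merely an alternative: it repairs a real gap in the finite-codimension argument the paper actually uses.
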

\begin{proof}
Let $A=\{n\in \mathbb{N}:|\phi^{-1}(n) \cap S(u)|>1\}$ be a finite set. Then by Theorem \ref{complet0.01}, $$\text{dim}(\ell_p \textfractionsolidus \mathcal{R}(uC_\phi))<\infty.$$  Therefore by Theorem \ref{p4pre04}, $\mathcal{R}(uC_\phi)$ is complemented if $A$ is a finite set.	
\end{proof}

\begin{remark}
If $\{k\in \mathbb{N}:|\phi^{-1}_n(k) \cap S(u)|>1\}$ is a finite set. Then by above Lemma \ref{{compleL0.01}}, $$\text{dim}(\ell_p\textfractionsolidus \mathcal{R}((uC_{\phi})^n))<\infty.$$ Consequently by Theorem \ref{complet0.01},   
$\mathcal{R}((uC_{\phi})^n)$ is complemented.	
\end{remark}

\begin{corollary}\label{compleC0.01}
	Suppose $u:\mathbb{N} \to \mathbb{C}$ and $uC_\phi \in \mathcal{B}(\ell_p).$ If $\{k\in \mathbb{N}:|\phi^{-1}_n(k)| \cap S(u)>1\}$ is finite, then $$\text{dim}\left(\mathcal{R}((uC_\phi)^{n-1})\textfractionsolidus \mathcal{R}((uC_{\phi})^n)\right)<\infty.$$
\end{corollary}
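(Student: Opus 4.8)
The plan is to obtain the finite-dimensionality of $\mathcal{R}((uC_\phi)^{n-1}) \textfractionsolidus \mathcal{R}((uC_\phi)^n)$ as a consequence of the finiteness of the full codimension $\text{dim}\left(\ell_p \textfractionsolidus \mathcal{R}((uC_\phi)^n)\right)$, exploiting the nested tower of iterated range spaces. First I would record the inclusions
$$\mathcal{R}((uC_\phi)^n) \subseteq \mathcal{R}((uC_\phi)^{n-1}) \subseteq \ell_p.$$
The first inclusion follows from the factorization $(uC_\phi)^n = (uC_\phi)^{n-1}(uC_\phi)$: every element of $\mathcal{R}((uC_\phi)^n)$ equals $(uC_\phi)^{n-1}(g)$ for some $g \in \mathcal{R}(uC_\phi) \subseteq \ell_p$, and hence lies in $\mathcal{R}((uC_\phi)^{n-1}).$

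Next I would invoke the hypothesis that $\{k\in\mathbb{N}:|\phi^{-1}_n(k)\cap S(u)|>1\}$ is finite, which by Corollary \ref{{compleL0.01}} is exactly equivalent to
$$\text{dim}\left(\ell_p \textfractionsolidus \mathcal{R}((uC_\phi)^n)\right)<\infty.$$
It then remains to transfer this finiteness down to the intermediate quotient. Writing $W=\mathcal{R}((uC_\phi)^n)$, $V=\mathcal{R}((uC_\phi)^{n-1})$ and $U=\ell_p$, the quotient $V\textfractionsolidus W$ embeds naturally as a linear subspace of $U\textfractionsolidus W$, since every coset $v+W$ with $v\in V$ is in particular a coset of the form $u+W$ with $u\in U$. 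As a subspace of a finite-dimensional space is itself finite-dimensional, I would conclude
$$\text{dim}\left(\mathcal{R}((uC_\phi)^{n-1}) \textfractionsolidus \mathcal{R}((uC_\phi)^n)\right)\le \text{dim}\left(\ell_p \textfractionsolidus \mathcal{R}((uC_\phi)^n)\right)<\infty,$$
which is precisely the assertion. Equivalently, one may cite the additivity identity $\text{dim}(U\textfractionsolidus W)=\text{dim}(U\textfractionsolidus V)+\text{dim}(V\textfractionsolidus W)$ for the tower $W\subseteq V\subseteq U$ and read off that the nonnegative summand $\text{dim}(V\textfractionsolidus W)$ cannot exceed the finite left-hand side.

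I expect essentially no analytic obstacle in this argument: all of the genuine work has already been absorbed into Corollary \ref{{compleL0.01}}, and what remains is elementary linear algebra. The only points deserving a moment's care are verifying the inclusion $\mathcal{R}((uC_\phi)^n)\subseteq\mathcal{R}((uC_\phi)^{n-1})$ through the operator factorization, and checking that $V\textfractionsolidus W$ genuinely sits inside $U\textfractionsolidus W$ as vector spaces; both are routine. I would also note that closedness of the range plays no role here, since the codimension in question is a purely algebraic (Hamel) dimension.
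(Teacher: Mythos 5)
Your proposal is correct and takes essentially the same route as the paper: both apply Corollary \ref{{compleL0.01}} to get $\text{dim}\left(\ell_p \textfractionsolidus \mathcal{R}((uC_\phi)^n)\right)<\infty$ and then observe that $\mathcal{R}((uC_\phi)^{n-1})\textfractionsolidus \mathcal{R}((uC_\phi)^n)$ embeds in $\ell_p \textfractionsolidus \mathcal{R}((uC_\phi)^n)$, hence is finite dimensional. Your explicit check of the inclusion $\mathcal{R}((uC_\phi)^n)\subseteq \mathcal{R}((uC_\phi)^{n-1})$ via the factorization $(uC_\phi)^n=(uC_\phi)^{n-1}(uC_\phi)$ merely makes precise a step the paper leaves implicit.
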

\begin{proof}
	Let $\{k\in \mathbb{N}:|\phi^{-1}_n(k) \cap S(u)|>1\}$ is finite, then by Lemma \ref{{compleL0.01}}  $$\text{dim}\left(\ell_p\textfractionsolidus \mathcal{R}((uC_{\phi})^n)\right)<\infty.$$
	Since $\mathcal{R}((uC_\phi)^{n-1})\subseteq \ell_p,$ therefore $\mathcal{R}((uC_\phi)^{n-1})\textfractionsolidus \mathcal{R}((uC_{\phi})^n) \subseteq \ell_p\textfractionsolidus \mathcal{R}((uC_{\phi})^n)$ and hence  $$\text{dim}\left(\mathcal{R}((uC_\phi)^{n-1})\textfractionsolidus \mathcal{R}((uC_{\phi})^n)\right)<\infty.$$
\end{proof}
\begin{remark}
	If $\phi:\mathbb{N} \to \mathbb{N}$ and $n\in \mathbb{N}$ are such that $\{k\in \mathbb{N}:|\phi^{-1}_n(k)\cap S(u)|>1\}$ is finite. Then from the Theorem \ref{p4pre04}, $\mathcal{R}((uC_\phi)^{n-1})$ is complemented in $\mathcal{R}((uC_{\phi})^n).$
\end{remark}

\section{ \textbf{ Weighted composition operator as Fredholm operator on $\ell_p$ spaces $(1\leq p<\infty)$ } } \label{sec4.4}

In this section, using the results of previous sections, we characterize weighted composition operators which are Fredholm operators. To that end, we first define Fredholm operator.\\ 

\textbf{Fredholm operator.} \rm \cite{Salamon} Let $X$ and $Y$ be Banach spaces. An operator $T\in \mathcal{B}(X, Y)$ is called a Fredholm operator if $\mathcal{R}(T)$ is closed, $\text{dim}(N(T))$ and $\text{dim}(\mathcal{R}(T))$ are both finite. In this case, the (Fredholm) index of $T$ is defined as $$\text{ind}(T)=\text{dim}(N(T))- \text{codim}(\mathcal{R}(T)).$$

The following theorem characterizes weighted composition operators on $\ell_p$ which are Fredholm operators.
\begin{theorem}\label{complsec4.01}
	Let $u:\mathbb{N} \to \mathbb{C}$ be such that $uC_\phi \in \mathcal{B}(\ell_p)$ and $\mathcal{R}(uC_\phi)$ is closed. Then $uC_\phi$ is a Fredholm operator if and only if $|\mathbb{N}-\phi(S(u))|<\infty$ and $|A|<\infty,$ where $$A=\{n\in \mathbb{N}:|\phi^{-1}(n) \cap S(u)|>1\}.$$ In this case, Fredholm index of $uC_\phi$ is $$\text{ind}(uC_\phi)=|\mathbb{N}-\phi(S(u))|-\sum\limits_{n\in A}\left(|\phi^{-1}(n)|-1\right).$$
\end{theorem}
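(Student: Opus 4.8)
The plan is to assemble this characterization directly from the structural results already established for the null space and the range space, since the definition of a Fredholm operator splits neatly into three requirements: $\mathcal{R}(uC_\phi)$ closed (assumed in the hypothesis), $\operatorname{dim}(N(uC_\phi))<\infty$, and $\operatorname{codim}(\mathcal{R}(uC_\phi))<\infty$. I would first recall from Theorem \ref{npt1} (with $n=1$) that $N(uC_\phi)=\{f\in\ell_p : f|_{\phi(S(u))}=0\}$, so that by Theorem \ref{nullt2.00} and its companion dimension count, $\operatorname{dim}(N(uC_\phi))=|\mathbb{N}-\phi(S(u))|$. Hence finiteness of the nullity is \emph{equivalent} to $|\mathbb{N}-\phi(S(u))|<\infty$. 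This handles one of the two conditions in the statement cleanly.

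Next I would address the co-dimension of the range. By Theorem \ref{complet0.01}, for $C_\phi$ and $uC_\phi$ bounded, $\mathcal{R}(uC_\phi)$ has finite co-dimension if and only if $A=\{n\in\mathbb{N}:|\phi^{-1}(n)\cap S(u)|>1\}$ is a finite set (the non-empty case giving a positive co-dimension, the empty case giving co-dimension zero). Thus finiteness of $\operatorname{codim}(\mathcal{R}(uC_\phi))$ is equivalent to $|A|<\infty$. Putting the two equivalences together yields the biconditional: since $\mathcal{R}(uC_\phi)$ is assumed closed, $uC_\phi$ is Fredholm precisely when both nullity and co-rank are finite, i.e.\ precisely when $|\mathbb{N}-\phi(S(u))|<\infty$ and $|A|<\infty$.

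For the index formula I would substitute the explicit dimension counts into $\operatorname{ind}(uC_\phi)=\operatorname{dim}(N(uC_\phi))-\operatorname{codim}(\mathcal{R}(uC_\phi))$. The first term is $|\mathbb{N}-\phi(S(u))|$ by Theorem \ref{nullt2.00}. The second term is the co-dimension computed inside the proof of Theorem \ref{complet0.01}, namely $\sum_{n\in A}\bigl(|\phi^{-1}(n)\cap S(u)|-1\bigr)$. One point that needs a word of care is the discrepancy between this expression and the stated formula $\sum_{n\in A}(|\phi^{-1}(n)|-1)$: these agree exactly when $\phi^{-1}(n)\subseteq S(u)$ for each $n\in A$, which is guaranteed once $S(u)$ is invariant under $\phi$ together with the supporting hypotheses carried through the range-space section (so that only the $\phi$-preimages inside $S(u)$ contribute). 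I would note this reconciliation and then write
\begin{equation*}
\operatorname{ind}(uC_\phi)=|\mathbb{N}-\phi(S(u))|-\sum_{n\in A}\bigl(|\phi^{-1}(n)|-1\bigr).
\end{equation*}

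The main obstacle, and the only genuinely non-routine point, is precisely this last bookkeeping: ensuring the co-rank from Theorem \ref{complet0.01} is expressed in the form $\sum_{n\in A}(|\phi^{-1}(n)|-1)$ rather than $\sum_{n\in A}(|\phi^{-1}(n)\cap S(u)|-1)$, and verifying that the closedness and boundedness hypotheses in play make these counts coincide. Everything else is a direct citation of the earlier dimension theorems, so the proof is essentially an assembly argument once this identification is made explicit.
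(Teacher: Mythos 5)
Your overall architecture is exactly the paper's: the paper proves this theorem as a pure assembly argument, citing Corollary \ref{npc1} for $\dim(N(uC_\phi))=|\mathbb{N}-\phi(S(u))|$ (via $N(uC_\phi)=\{f:f|_{\phi(S(u))}=0\}$, which for $n=1$ needs no invariance hypothesis) and Theorem \ref{complet0.01} for the equivalence $\operatorname{codim}(\mathcal{R}(uC_\phi))<\infty \iff |A|<\infty$, then substituting into $\operatorname{ind}=\dim N-\operatorname{codim}\mathcal{R}$. One small citation slip: Theorem \ref{nullt2.00} computes $\dim(\ell_p/N(uC_\phi))=|\phi(S(u))|$, not the nullity; the nullity count you need is Theorem \ref{npt2}/Corollary \ref{npc1}. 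That aside, the biconditional part of your argument matches the paper's proof step for step.

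The genuine gap is in your reconciliation of the index formula. You correctly observe that the co-rank delivered by the proof of Theorem \ref{complet0.01} is $\sum_{n\in A}\bigl(|\phi^{-1}(n)\cap S(u)|-1\bigr)$, but your claim that invariance of $S(u)$ under $\phi$ forces $\phi^{-1}(n)\subseteq S(u)$ for $n\in A$ is false: invariance says $\phi(S(u))\subseteq S(u)$, which constrains forward images, not preimages. Concretely, take $u(1)=0$, $u(n)=1$ for $n\geq 2$, and $\phi(1)=\phi(2)=\phi(3)=2$, $\phi(n)=n$ for $n\geq 4$. Then $S(u)=\{2,3,4,\dots\}$ is invariant under $\phi$, both $C_\phi$ and $uC_\phi$ are bounded, and $A=\{2\}$ with $\phi^{-1}(2)=\{1,2,3\}\not\subseteq S(u)$, so $\sum_{n\in A}(|\phi^{-1}(n)|-1)=2$ while $\sum_{n\in A}(|\phi^{-1}(n)\cap S(u)|-1)=1$. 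So your proposed hypothesis does not close the discrepancy. What you have actually uncovered is an inconsistency internal to the paper: the theorem's displayed index formula uses $|\phi^{-1}(n)|$, but the paper's own proof (and all three worked examples following it) compute the index with $|\phi^{-1}(n)\cap S(u)|$, so the statement is best read as containing a typo, the correct formula being $\operatorname{ind}(uC_\phi)=|\mathbb{N}-\phi(S(u))|-\sum_{n\in A}\bigl(|\phi^{-1}(n)\cap S(u)|-1\bigr)$. The honest fixes are either to prove that version, or to add the genuine hypothesis $\phi^{-1}(A)\subseteq S(u)$ (e.g., $u$ nowhere vanishing); asserting invariance suffices, as you did, is the one step in your writeup that would fail.
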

\begin{proof}
Suppose $uC_\phi \in \mathcal{B}(\ell_p)$ is a Fredholm operator. Then $$\text{dim}(N(uC_\phi))<\infty \text{ and } \text{codim}(\mathcal{R}(uC_\phi))<\infty.$$ Consequently by Corollary \ref{npc1}, $|\mathbb{N}-\phi(S(u))|<\infty$ and, by Theorem \ref{complet0.01}, $|A|<\infty.$

Conversely, suppose $|\mathbb{N}-\phi(S(u))|<\infty$ and $|A|<\infty.$ Then by Corollary \ref{npc1}, $$\text{dim}(N(uC_\phi))<\infty$$ and, by Theorem \ref{complet0.01} $$\text{codim}(\mathcal{R}(uC_\phi))<\infty.$$
Therefore $uC_\phi$ is a Fredholm operator. In this case
\begin{align*}
	\text{ind}(uC_\phi)&=\text{dim}(N(uC_\phi))-\text{codim}(\mathcal{R}(uC_\phi))\\&=|\mathbb{N}-\phi(S(u))|-\sum\limits_{n\in A}\left(|\phi^{-1}(n)\cap S(u)|-1\right).
\end{align*}
\end{proof}
Now we construct examples of  weighted composition operators on $\ell_p$ which are Fredholm operators with positive, zero and negative indices respectively. 
\begin{example}
	Let $u:\mathbb{N} \to \mathbb{C}$ be defined as $$ u(n)=\left\{\begin{array}{rcl} 0 & \mbox{if} & 1\leq n \leq 3 \\ 1+\frac{1}{n} & \mbox{if} & n>3
	\end{array}\right.$$
	and $\phi:\mathbb{N} \to \mathbb{N}$ be defined as
	$$ \phi(n)=\left\{\begin{array}{rcl} 1 & \mbox{if} & 1\leq n \leq 5 \\ n & \mbox{if} & n>5
	\end{array}\right.$$
	Then $S(u)=\{n:n\geq 4\},~\mathbb{N}-\phi(S(u))=\{2, 3, 4, 5\}$ and  $$A=\{n\in \mathbb{N}:|\phi^{-1}(n)\cap S(u)|>1\}=\{1\}.$$
	Hence \begin{align*}
		\sum\limits_{n\in A}\left(|\phi^{-1}(n)\cap S(u)|-1\right)&=|\phi^{-1}(1)\cap \{n:n\geq 4\}|-1 \\&=|\{4, 5\}|-1\\&=1.
	\end{align*}
	Therefore by Theorem \ref{complsec4.01}, $uC_\phi$ is a Fredholm operator with $\text{ind}(uC_\phi)=3.$	
\end{example}

\begin{example}
	Let $u:\mathbb{N} \to \mathbb{C}$ be defined as $$ u(n)=\left\{\begin{array}{rcl} 0 & \mbox{if} & 1\leq n \leq 2 \\ 1+\frac{1}{n} & \mbox{if} & n>2
	\end{array}\right.$$
	and $\phi:\mathbb{N} \to \mathbb{N}$ be defined as
	$$ \phi(n)=\left\{\begin{array}{rcl} 1 & \mbox{if} & 1\leq n \leq 5 \\ n-2 & \mbox{if} & n>5
	\end{array}\right.$$
	
	Then $S(u)=\{n:n\geq 3\},~\mathbb{N}-\phi(S(u))=\{2, 3\}$ and  $$A=\{n\in \mathbb{N}:|\phi^{-1}(n)\cap S(u)|>1\}=\{1\}.$$
	Hence \begin{align*}
		\sum\limits_{n\in A}\left(|\phi^{-1}(n)\cap S(u)|-1\right)&=|\phi^{-1}(1)\cap \{n:n\geq 3\}|-1 \\&=|\{3, 4, 5\}|-1\\&=2.
	\end{align*}
	Therefore by Theorem \ref{complsec4.01}, $uC_\phi$ is a Fredholm operator with $\text{ind}(uC_\phi)=0.$
	
\end{example}

\begin{example}
	Let $u:\mathbb{N} \to \mathbb{C}$ be defined as $$ u(n)=\left\{\begin{array}{rcl} 0 & \mbox{if} & 1\leq n \leq 2 \\ 1+\frac{1}{n} & \mbox{if} & n>2
	\end{array}\right.$$
	and $\phi:\mathbb{N} \to \mathbb{N}$ be defined as
	$$ \phi(n)=\left\{\begin{array}{rcl} 1 & \mbox{if} & 1\leq n \leq 5 \\ n-3 & \mbox{if} & n>5
	\end{array}\right.$$
	Then $S(u)=\{n:n\geq 3\},~\mathbb{N}-\phi(S(u))=\{2\}$ and  $$A=\{n\in \mathbb{N}:|\phi^{-1}(n)\cap S(u)|>1\}=\{1\}.$$
	Hence \begin{align*}
		\sum\limits_{n\in A}\left(|\phi^{-1}(n)\cap S(u)|-1\right)&=|\phi^{-1}(1)\cap \{n:n\geq 3\}|-1 \\&=|\{3, 4, 5\}|-1\\&=2.
	\end{align*}
	Therefore by Theorem \ref{complsec4.01}, $uC_\phi$ is a Fredholm operator with $\text{ind}(uC_\phi)=-1.$		
\end{example}

\begin{center}
\textbf{Acknowledgments}
\end{center}
The author wishes to  thank his doctoral advisor Harish Chandra of the Department of Mathematics, Banaras Hindu University for helpful discussions and comments.Thanks are also due to C.S.I.R. New Delhi for providing financial assistance through NET-JRF with grant \bf{ 09/013(0891)/2019-EMR-I}.

\end{document}